\newtheorem{thm}{Theorem}[section]
\newtheorem{lem}[thm]{Lemma}
\newtheorem{prop}[thm]{Proposition}
\newtheorem{cor}[thm]{Corollary}
\theoremstyle{definition}
\newtheorem{defn}[thm]{Definition}
\newtheorem{ex}[thm]{Example}
\theoremstyle{remark}
\newtheorem{rem}[thm]{Remark}
\newcommand{\A}{\mathcal{A}}
\newcommand{\F}{\mathcal{F}}
\newcommand{\FF}{\mathbb{F}}
\newcommand{\Q}{\mathbb{Q}}
\newcommand{\R}{\mathbb{R}}
\newcommand{\SSS}{\mathcal{S}}
\newcommand{\Z}{\mathbb{Z}}
\newcommand{\ZZ}{\mathfrak{Z}}
\newcommand{\ZZZ}{\mathcal{Z}}
\newcommand{\sym}{\mathfrak{S}}
\newcommand{\ve}[1]{\boldsymbol{#1}}
\DeclareRobustCommand{\sbinom}{\genfrac[]\z@{}}
\begin{document}
\title{Restricted sum formula for finite and symmetric multiple zeta values}
\author{Hideki Murahara}
\address{Nakamura Gakuen University Graduate School,
 5-7-1, Befu, Jonan-ku, Fukuoka, 814-0198, Japan} 
\email{hmurahara@nakamura-u.ac.jp}
\author{Shingo Saito}
\address{Faculty of Arts and Science, Kyushu University,
 744, Motooka, Nishi-ku, Fukuoka, 819-0395, Japan}
\email{ssaito@artsci.kyushu-u.ac.jp}
\keywords{finite multiple zeta values, symmetric multiple zeta values, symmetrised multiple zeta values, finite real multiple zeta values, sum formula, restricted sum formula}
\subjclass[2010]{Primary 11M32; Secondary 05A19}
\begin{abstract}
 The sum formula for finite and symmetric multiple zeta values, established by Wakabayashi and the authors,
 implies that if the weight and depth are fixed and the specified component is required to be more than one,
 then the values sum up to a rational multiple of the analogue of the Riemann zeta value.
 We prove that the result remains true if we further demand
 that the component should be more than two or that another component should also be more than one.
\end{abstract}
\maketitle

\section{Introduction}
The \emph{multiple zeta values} and \emph{multiple zeta-star values} are the real numbers defined by
\begin{align*}
 \zeta(k_1,\dots,k_r)&=\sum_{m_1>\dots>m_r\ge1}\frac{1}{m_1^{k_1}\dotsm m_r^{k_r}},\\
 \zeta^{\star}(k_1,\dots,k_r)&=\sum_{m_1\ge\dots\ge m_r\ge1}\frac{1}{m_1^{k_1}\dotsm m_r^{k_r}}
\end{align*}
for $k_1,\dots,k_r\in\Z_{\ge1}$ with $k_1\ge2$.
They are generalisations of the values of the Riemann zeta function at positive integers,
and they are known to have interesting algebraic structures due to the many relations among them,
the simplest being $\zeta(2,1)=\zeta(3)$.
See, for example, the book \cite{Zh} by Zhao for further details on multiple zeta(-star) values.

The variants of multiple zeta values that we shall be looking at in this paper
are \emph{finite multiple zeta values} $\zeta_{\A}(k_1,\dots,k_r)$ and
\emph{symmetric multiple zeta values} $\zeta_{\SSS}(k_1,\dots,k_r)$
(the latter also known as symmetrised multiple zeta values and finite real multiple zeta values),
both introduced by Kaneko and Zagier \cite{KZ} (see \cite{Zh} for details).
Set $\A=\prod_p\FF_p/\bigoplus_p\FF_p$, where $p$ runs over all primes.
For $k_1,\dots,k_r\in\Z_{\ge1}$, we define
\begin{align*}
 \zeta_{\A}(k_1,\dots,k_r)&=\Biggl(\sum_{p>m_1>\dots>m_r\ge1}\frac{1}{m_1^{k_1}\dotsm m_r^{k_r}}\bmod p\Biggr)_p\in\A,\\
 \zeta_{\A}^{\star}(k_1,\dots,k_r)&=\Biggl(\sum_{p>m_1\ge\dots\ge m_r\ge1}\frac{1}{m_1^{k_1}\dotsm m_r^{k_r}}\bmod p\Biggr)_p\in\A.
\end{align*}
Let $\ZZZ$ denote the $\Q$-linear subspace of $\R$ spanned by the multiple zeta values.
For $k_1,\dots,k_r\in\Z_{\ge1}$, we define
\begin{align*}
 \zeta_{\SSS}(k_1,\dots,k_r)&=\sum_{j=0}^{r}(-1)^{k_1+\dots+k_j}\zeta(k_j,\dots,k_1)\zeta(k_{j+1},\dots,k_r)\bmod\zeta(2)\in\ZZZ/\zeta(2)\ZZZ,\\
 \zeta_{\SSS}^{\star}(k_1,\dots,k_r)&=\sum_{j=0}^{r}(-1)^{k_1+\dots+k_j}\zeta^{\star}(k_j,\dots,k_1)\zeta^{\star}(k_{j+1},\dots,k_r)\bmod\zeta(2)\in\ZZZ/\zeta(2)\ZZZ,
\end{align*}
where we set $\zeta(\emptyset)=\zeta^{\star}(\emptyset)=1$.
The multiple zeta(-star) values that appear in the definition of the symmetric multiple zeta(-star) values
are the regularised values if the first component is $1$;
although there are two ways of regularisation, called the harmonic regularisation and the shuffle regularisation,
it is known that the symmetric multiple zeta values remain unchanged as elements of $\ZZZ/\zeta(2)\ZZZ$
no matter which regularisation we use (see \cite{KZ}).

Kaneko and Zagier \cite{KZ} made a striking conjecture that the finite multiple zeta values
and the symmetric multiple zeta values are isomorphic;
more precisely, if we let $\ZZZ_{\A}$ denote the $\Q$-linear subspace of $\A$ spanned by the finite multiple zeta values,
then $\ZZZ_{\A}$ and $\ZZZ/\zeta(2)\ZZZ$ are isomorphic as $\Q$-algebras via the correspondence
$\zeta_{\A}(k_1,\dots,k_r)\leftrightarrow\zeta_{\SSS}(k_1,\dots,k_r)$.
It means that $\zeta_{\A}(k_1,\dots,k_r)$ and $\zeta_{\SSS}(k_1,\dots,k_r)$ satisfy the same relations,
and a notable example of such relations is the sum formula (Theorem~\ref{thm:sum_formula}).
In what follows, we use the letter $\F$ when it can be replaced with either $\A$ or $\SSS$;
for example, by $\zeta_{\F}(1)=0$ we mean that both $\zeta_{\A}(1)=0$ and $\zeta_{\SSS}(1)=0$ are true.
We write
\[
 \ZZ_{\F}(k)=
 \begin{cases}
  (B_{p-k}/k\bmod p)_p&\text{if $\F=\A$;}\\
  \zeta(k)\bmod\zeta(2)&\text{if $\F=\SSS$}
 \end{cases}
\]
for $k\in\Z_{\ge2}$, where $B_n$ denotes the $n$-th Bernoulli number.
Note that it can be verified rather easily that $\zeta_{\F}(k-1,1)=\ZZ_{\F}(k)$ for $k\in\Z_{\ge2}$,
so that $(B_{p-k}/k\bmod p)_p$ corresponds to $\zeta(k)\bmod\zeta(2)$ via the above-mentioned isomorphism $\ZZZ_{\A}\cong\ZZZ/\zeta(2)\ZZZ$.

\begin{thm}[Saito-Wakabayashi {\cite{SW}}, Murahara {\cite{M}}]\label{thm:sum_formula}
 For $k,r,i\in\Z$ with $1\le i\le r\le k-1$, we have
 \begin{align*}
  \sum_{\substack{k_1+\dots+k_r=k\\k_i\ge2}}\zeta_{\F}(k_1,\dots,k_r)
  &=(-1)^r\sum_{\substack{k_1+\dots+k_r=k\\k_i\ge2}}\zeta_{\F}^{\star}(k_1,\dots,k_r)\\
  &=(-1)^{i-1}\biggl(\binom{k-1}{i-1}+(-1)^r\binom{k-1}{r-i}\biggr)\ZZ_{\F}(k).
 \end{align*}
\end{thm}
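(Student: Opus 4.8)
The plan is to prove the non-star and star equalities in parallel, using only the three properties shared by $\zeta_{\A}$ and $\zeta_{\SSS}$: the harmonic (stuffle) product, the reversal relation $\zeta_{\F}(k_1,\dots,k_r)=(-1)^{k}\zeta_{\F}(k_r,\dots,k_1)$ with $k=k_1+\dots+k_r$, and the vanishing $\zeta_{\F}(k)=0$ of every depth-one value. Since $\ZZ_{\F}(k)=0$ whenever $k$ is even (because $B_{p-k}=0$ for large $p$, and $\zeta(k)\in\zeta(2)\ZZZ$ for even $k$), the right-hand side vanishes for even $k$; the computation below is uniform in $k$ and will in particular give $S_i=0$ there, so the content lies in odd $k$. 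I would first dispose of the equality between the non-star and star sums by expanding each $\zeta_{\F}^{\star}$ as the sum of $\zeta_{\F}$ over all ways of merging adjacent arguments (a formula valid in both theories). Merging only increases entries, so the constraint $k_i\ge2$ is preserved, and the standard count of mergings converts the star sum into $(-1)^{r}$ times the non-star sum.

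The core is the evaluation of $S_i:=\sum_{k_1+\dots+k_r=k,\ k_i\ge2}\zeta_{\F}(k_1,\dots,k_r)$, and my first step would be to remove the admissibility constraint. Writing $T(r,k)$ for the sum of $\zeta_{\F}(k_1,\dots,k_r)$ over all compositions of $k$ into $r$ positive parts, the stuffle product of $\zeta_{\F}(a)=0$ with a depth-$(r-1)$ value, summed over $a\ge1$ and over all depth-$(r-1)$ compositions of $k-a$, yields the recursion $r\,T(r,k)+(k-r+1)\,T(r-1,k)=0$; since $T(1,k)=\zeta_{\F}(k)=0$, this forces $T(r,k)=0$ for all $r$. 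Consequently $S_i=-U_i$, where $U_i:=\sum_{k_1+\dots+k_r=k,\ k_i=1}\zeta_{\F}(k_1,\dots,k_r)$ is the sum over compositions whose $i$-th part is forced to equal $1$. Everything thus reduces to computing $U_i$, and the target becomes $U_i=(-1)^{i}\bigl(\binom{k-1}{i-1}+(-1)^{r}\binom{k-1}{r-i}\bigr)\ZZ_{\F}(k)$.

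To evaluate $U_i$ I would set up a depth recursion that keeps track of the position $i$. Stuffling $\zeta_{\F}(1)=0$ against a depth-$(r-1)$ value and summing \emph{symmetrically} only produces the position-unresolved identity $\sum_{i=1}^{r}U_i=0$, so the real work is to break the symmetry. I would do this by always peeling off an \emph{end} argument and using $U_i=(-1)^{k}U_{r+1-i}$ (a consequence of reversal) to swing the distinguished position to whichever end is convenient, the initial case being the classical sum formula where the special index is first. Iterating should express $U_i$ through the depth-one quantity $\ZZ_{\F}(k)=\zeta_{\F}(k-1,1)$, with the two binomial coefficients $\binom{k-1}{i-1}$ and $\binom{k-1}{r-i}$ appearing as the counts of the telescoping steps coming from the two ends and the factor $(-1)^{r}$ marking the reversed contribution. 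Equivalently one can encode $U_i$ in the $p$-component (resp.\ in the regularised real values) as a coefficient extraction from $\tfrac{1}{m_i}\prod_{j\ne i}(m_j-x)^{-1}$ and read off the binomials after inserting the power-sum congruences mod $p$ (resp.\ the regularisation).

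The main obstacle I anticipate is precisely this positional bookkeeping. Because the summation variables are linearly ordered, the distinguished index $i$ genuinely singles out the $i$-th largest variable, so none of the symmetric devices (symmetrised stuffle sums, free symmetrisation of $\prod(m_j-x)^{-1}$) apply directly: they collapse the position and return only aggregate relations such as $T(r,k)=0$ and $\sum_i U_i=0$. Extracting the exact pair of binomial coefficients therefore demands a genuinely asymmetric induction, and the delicate point will be to check that this induction is legitimate in the symmetric theory as well—that the stuffle and reversal identities it uses hold in $\ZZZ/\zeta(2)\ZZZ$ independently of the choice of regularisation—so that the single argument covers both $\F=\A$ and $\F=\SSS$.
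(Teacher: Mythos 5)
You should first be aware that the paper contains no proof of this statement: Theorem~\ref{thm:sum_formula} is quoted from Saito--Wakabayashi \cite{SW} (the case $\F=\A$) and Murahara \cite{M} (the case $\F=\SSS$) and is used as an input to everything else, so the benchmark is the arguments in those papers, and measured against that your sketch contains one sound reduction and two genuine gaps. The sound part: the stuffle of $\zeta_{\F}(a)=0$ against depth-$(r-1)$ values does give $r\,T(r,k)+(k-r+1)\,T(r-1,k)=0$, hence $T(r,k)=0$ and $S_i=-U_i$, and reversal gives $U_i=(-1)^kU_{r+1-i}$; these steps are correct in both theories. The first gap is the star/non-star equality. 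Expanding each $\zeta_{\F}^{\star}(k_1,\dots,k_r)$ over merges of adjacent components and summing over the constrained compositions produces $\sum_{\ve{m}}N(\ve{m})\,\zeta_{\F}(\ve{m})$, where $\ve{m}$ runs over indices of weight $k$ and \emph{all} depths $s\le r$, and $N(\ve{m})$ is a nonnegative count of depth-$r$ refinements of $\ve{m}$ whose $i$-th entry is $\ge2$; moreover the constraint $k_i\ge2$ does not transfer to a fixed component of $\ve{m}$ but to whichever block absorbs the $i$-th slot, which varies with the merge pattern. No ``standard count of mergings'' can turn this positive-coefficient expression into $(-1)^r$ times the depth-$r$ sum: the sign can only emerge from further linear relations among the $\zeta_{\F}$'s, which is exactly why the paper proves its analogous identity (Lemma~\ref{lem:S_krij_star}) via the antipode relation of Proposition~\ref{prop:antipode} combined with Proposition~\ref{prop:sym_sum}, and why \cite{SW} and \cite{M} treat the star sums by separate arguments.

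The second and decisive gap is the evaluation itself. Everything in your plan beyond $S_i=-U_i$ is aggregate: stuffling $\zeta_{\F}(1)=0$ symmetrically yields only the relation $\sum_{i=1}^{r}U_i$ plus a sum of depth-$(r-1)$ sum-formula sums equals zero, and you yourself concede that the symmetric devices cannot resolve the position $i$. The asymmetric ``end-peeling induction'' that is supposed to produce the precise coefficients $\binom{k-1}{i-1}$ and $(-1)^r\binom{k-1}{r-i}$ is never formulated: no recursion is written down, no base case beyond $\zeta_{\F}(k-1,1)=\ZZ_{\F}(k)$ is identified, and your closing paragraph explicitly labels this positional bookkeeping---which is the entire content of the theorem---as an anticipated obstacle rather than a completed step. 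In \cite{SW} this is carried out by explicit congruences modulo $p$ for the constrained harmonic sums via power-sum and Bernoulli-number identities (roughly the coefficient extraction from $\frac{1}{m_i}\prod_{j\ne i}(m_j-x)^{-1}$ that you gesture at in one clause), and the $\SSS$-case in \cite{M} needs its own regularization-compatible argument; neither is routine. As it stands, your proposal establishes $T(r,k)=0$ and the reduction $S_i=-U_i$, both correct but elementary, and leaves both asserted equalities of Theorem~\ref{thm:sum_formula} unproved.
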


The theorem implies that the sums belong to $\Q\ZZ_{\F}(k)$.
Our main theorem states that similar sums also belong to $\Q\ZZ_{\F}(k)$ if $k$ is odd:
\begin{thm}[Main theorem]\label{thm:main}
 Let $k$ be an odd integer with $k\ge3$, and let $r$ be an integer with $1\le r\le k-2$.
 \begin{enumerate}
  \item For $i\in\Z$ with $1\le i\le r$, we have
   \[
    \sum_{\substack{k_1+\dots+k_r=k\\k_i\ge3}}\zeta_{\F}(k_1,\dots,k_r)
    =(-1)^r\sum_{\substack{k_1+\dots+k_r=k\\k_i\ge3}}\zeta_{\F}^{\star}(k_1,\dots,k_r)
    \in\Q\ZZ_{\F}(k).
   \]
  \item For distinct $i,j\in\Z$ with $1\le i,j\le r$, we have
   \[
    \sum_{\substack{k_1+\dots+k_r=k\\k_i,k_j\ge2}}\zeta_{\F}(k_1,\dots,k_r)
    =(-1)^r\sum_{\substack{k_1+\dots+k_r=k\\k_i,k_j\ge2}}\zeta_{\F}^{\star}(k_1,\dots,k_r)
    \in\Q\ZZ_{\F}(k).
   \]
 \end{enumerate}
\end{thm}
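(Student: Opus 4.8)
My plan is to combine the sum formula (Theorem~\ref{thm:sum_formula}) with two structural facts—the reversal relation and the vanishing of the unrestricted sum in odd weight—and then reduce each assertion, by inclusion--exclusion, to a single genuinely new restricted sum. First I would record the reversal relation
\[
 \zeta_{\F}(k_1,\dots,k_r)=(-1)^{k_1+\dots+k_r}\zeta_{\F}(k_r,\dots,k_1),
\]
which for $\F=\A$ follows from the substitution $m_j\mapsto p-m_j$ modulo $p$ and for $\F=\SSS$ is a standard property, and which holds for the star values as well. Since $k$ is odd, each $\zeta_{\F}(k_1,\dots,k_r)$ cancels against $\zeta_{\F}$ of its reversal, so the unrestricted sum vanishes:
\[
 \sum_{k_1+\dots+k_r=k}\zeta_{\F}(k_1,\dots,k_r)=0,
\]
and likewise on the star side; this is where the oddness of $k$ enters decisively. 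Writing $C(k,r,i)=(-1)^{i-1}\bigl(\binom{k-1}{i-1}+(-1)^r\binom{k-1}{r-i}\bigr)$ for the coefficient in Theorem~\ref{thm:sum_formula}, the vanishing of the unrestricted sum together with the sum formula gives at once the complementary ``fixed $1$'' sum $\sum_{k_i=1}\zeta_{\F}(k_1,\dots,k_r)=-C(k,r,i)\ZZ_{\F}(k)\in\Q\ZZ_{\F}(k)$.

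Next I would run the inclusion--exclusion. For part~(1), splitting $k_i\ge2$ into $k_i=2$ and $k_i\ge3$ shows that the desired sum equals $C(k,r,i)\ZZ_{\F}(k)-\sum_{k_i=2}\zeta_{\F}$, so the claim is equivalent to $\sum_{k_i=2}\zeta_{\F}\in\Q\ZZ_{\F}(k)$. For part~(2), decomposing each of $k_i,k_j$ into ``$=1$'' and ``$\ge2$'' produces a $2\times2$ array of sums whose total is the (vanishing) unrestricted sum; two of the marginals are given by the sum formula and the two ``fixed $1$'' marginals were just computed, and solving the resulting linear relations leaves a single free unknown, so part~(2) is equivalent to $\sum_{k_i=1,\,k_j=1}\zeta_{\F}\in\Q\ZZ_{\F}(k)$. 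Thus both parts reduce to showing that one sum, with one or two entries pinned to small values, lies in $\Q\ZZ_{\F}(k)$; the star versions are then obtained in parallel by the same reductions, exactly as in Theorem~\ref{thm:sum_formula}.

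To evaluate the remaining sums I would use the harmonic (stuffle) product relations together with the vanishing $\zeta_{\F}(1)=\zeta_{\F}(2)=0$ (both of which hold for $\A$ and $\SSS$). Multiplying $\zeta_{\F}(1)$ or $\zeta_{\F}(2)$ into a composition and expanding yields, for each index tuple, a linear relation among values obtained by inserting a new part and by incrementing an existing part; an increment at the distinguished position turns a pinned $1$ into a pinned $2$, or a pinned $2$ into a $k_i\ge3$ term, which is exactly the operation generating $\sum_{k_i=2}$ and $\sum_{k_i=1,k_j=1}$. Summing these relations over the appropriate family of compositions and feeding back the known marginals and the reversal identities should express each remaining sum through the explicit binomial coefficients of Theorem~\ref{thm:sum_formula}, hence as a rational multiple of $\ZZ_{\F}(k)$.

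The main obstacle is that the harmonic relations are blind to the position of the distinguished entry and that insertion raises the depth, whereas the quantities I wish to isolate are position-specific and of fixed depth. I therefore expect the real work to lie in closing an induction on the depth $r$: one must strengthen the statement so as to treat all positions, and both parts of the theorem, simultaneously, and then assemble the position-blind harmonic relations, the pairwise reversal identities such as $\sum_{k_i=2}\zeta_{\F}=-\sum_{k_{r+1-i}=2}\zeta_{\F}$, and the explicit values $C(k,r,i)$ into a linear system that pins down every unknown. Checking that this system has the predicted rational solution, and that the bookkeeping across depths stays consistent uniformly for $\F=\A$ and $\F=\SSS$, is the crux of the argument.
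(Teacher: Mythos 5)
Your reductions are correct as far as they go: the reversal identity, the vanishing of the unrestricted sums for odd $k$, and the inclusion--exclusion that converts part~(1) into the claim $\sum_{k_i=2}\zeta_{\F}\in\Q\ZZ_{\F}(k)$ and part~(2) into $\sum_{k_i=k_j=1}\zeta_{\F}\in\Q\ZZ_{\F}(k)$ are all valid. But the argument stops exactly where the theorem actually lives. Your plan for evaluating the pinned sums --- stuffle-multiplying $\zeta_{\F}(1)$ or $\zeta_{\F}(2)$ into compositions and summing --- can only produce position-symmetric relations: the harmonic product of $\zeta_{\F}(m)$ with $\zeta_{\F}(k_1,\dots,k_{r-1})$ is a sum of insertions and increments over \emph{all} slots, so after summing over families of compositions you recover linear combinations of identities of the type of Proposition~\ref{prop:sym_sum}, which cannot distinguish the position $i$ beyond the reversal symmetry $\sum_{k_i=2}\zeta_{\F}=-\sum_{k_{r+1-i}=2}\zeta_{\F}$ you already noted. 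You flag this obstacle yourself and defer it to an unspecified induction on $r$ whose linear system you do not exhibit or solve; that induction is the entire content of the theorem, so the proposal has a genuine gap rather than a complete proof.

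What closes the gap in the paper is a pair of inputs absent from your proposal: Hoffman--Jarossay duality $\zeta_{\F}^{\star}(\ve{k}^{\vee})=-\zeta_{\F}^{\star}(\ve{k})$, used in the equivalent form of Corollary~\ref{prop:phi_dual}, and Oyama's Ohno-type relation (Theorem~\ref{thm:oyama}). The latter converts the restricted sums directly --- no inclusion--exclusion needed --- into $\sum_{e_1'+e_2'+e_3'=k'}\zeta_{\F}((i+e_1',i'+e_2',i''+e_3')^{\vee})$, that is, into sums of values $\zeta_{\F}(\{1\}^a,2,\{1\}^c,2,\{1\}^b)$; duality together with the known sum formula then evaluates each such value (Lemmas~\ref{lem:zeta_1_2_1} and~\ref{lem:acb}), and a binomial summation (Lemma~\ref{lem:sum_signed_binom}) yields the explicit coefficient $N_{k,r,i,j}$. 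Neither duality nor the Ohno-type relation is a consequence of the harmonic product and the vanishing of $\zeta_{\F}(m)$, so without importing a tool of comparable strength your linear system will remain underdetermined. Note also that the equality $S_{k,r,i,j}=(-1)^rS_{k,r,i,j}^{\star}$ is not obtained ``in parallel by the same reductions'': the paper proves it via the antipode identity (Proposition~\ref{prop:antipode}) summed over the index set, with the cross terms killed by $\ZZ_{\F}(l)\ZZ_{\F}(k-l)=0$ for odd $k$; your one-line appeal to the star analogue of Theorem~\ref{thm:sum_formula} hides this step.
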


The rational coefficients can be written explicitly, though in a rather complicated manner,
in terms of binomial coefficients (see Theorem~\ref{thm:main_explict} for the preciese statement).

\begin{rem}
 If $k$ is even, then $\ZZ_{\F}(k)=0$ and numerical experiments suggest that the sums are not always equal to $0$.
\end{rem}

\section{Preliminary lemmas}
This section will give a few preliminary lemmas that will be used to prove our main theorem in the next section.

An \emph{index} is a (possibly empty) sequence of positive integers.
For an index $\ve{k}=(k_1,\dots,k_r)$,
the number $r$ is called its \emph{depth} and $k_1+\dots+k_r$ its \emph{weight}.

\begin{prop}\label{prop:sym_sum}
 If $(k_1,\dots,k_r)$ is a nonempty index, then
 \[
  \sum_{\sigma\in\sym_r}\zeta_{\F}(k_{\sigma(1)},\dots,k_{\sigma(r)})=\sum_{\sigma\in\sym_r}\zeta_{\F}^{\star}(k_{\sigma(1)},\dots,k_{\sigma(r)})=0,
 \]
 where $\sym_r$ denotes the symmetric group of order $r$.
\end{prop}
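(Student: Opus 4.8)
The plan is to deduce both identities from two inputs: that $\zeta_{\F}(k)=0$ for every $k\in\Z_{\ge1}$, and that $\zeta_{\F}$ is compatible with the harmonic (stuffle) product. The vanishing of the depth-one values is elementary. For $\F=\A$ it follows from $\sum_{m=1}^{p-1}m^{-k}\equiv0\pmod p$ for all sufficiently large $p$, so that the exceptional primes are absorbed into $\bigoplus_p\FF_p$. For $\F=\SSS$ one computes directly from the definition that $\zeta_{\SSS}(k)=(1+(-1)^k)\zeta(k)$, which is $0$ when $k$ is odd and, when $k$ is even, a rational multiple of $\zeta(2)^{k/2}$ and hence $0$ in $\ZZZ/\zeta(2)\ZZZ$.

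For the non-star identity I would induct on $r$, the case $r=1$ being exactly the vanishing above. For the inductive step I would invoke the iterated harmonic product of the depth-one values, which expands as
\[
 \prod_{i=1}^{r}\zeta_{\F}(k_i)=\sum_{(B_1,\dots,B_m)}\zeta_{\F}\biggl(\sum_{l\in B_1}k_l,\dots,\sum_{l\in B_m}k_l\biggr),
\]
the sum running over all ordered set partitions $(B_1,\dots,B_m)$ of $\{1,\dots,r\}$. The left-hand side is $0$. Collecting the terms on the right according to the underlying unordered partition $P$ and summing over the orderings of its blocks rewrites the right-hand side as $\sum_{P}\sum_{\sigma\in\sym_{|P|}}\zeta_{\F}(\dots)$, a sum of symmetric sums of the merged indices. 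The partition into singletons contributes exactly $\sum_{\sigma\in\sym_r}\zeta_{\F}(k_{\sigma(1)},\dots,k_{\sigma(r)})$, whereas every coarser partition has fewer than $r$ blocks and therefore contributes $0$ by the induction hypothesis. Hence the singleton term vanishes, which is the claim.

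For the star identity I would reduce to the non-star case just established. Writing $\zeta_{\F}^{\star}(k_1,\dots,k_r)$ as the sum of $\zeta_{\F}(\dots)$ over all ways of replacing each comma by either a comma or a plus (that is, merging consecutive components), I would substitute this into $\sum_{\sigma\in\sym_r}\zeta_{\F}^{\star}(k_{\sigma(1)},\dots,k_{\sigma(r)})$. Summing simultaneously over permutations and over consecutive merges amounts to summing over ordered set partitions $(B_1,\dots,B_m)$, each appearing with multiplicity $\prod_t|B_t|!$; regrouping by the unordered partition once more yields a weighted sum of symmetric sums of merged indices, every one of which is $0$ by the non-star result. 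This gives the vanishing of the star sum.

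The main obstacle is the bookkeeping in the two regroupings: verifying that the iterated stuffle product is precisely the multiplicity-free sum over ordered set partitions, and that summing the star expansion over permutations reproduces each ordered set partition with multiplicity $\prod_t|B_t|!$. (One can check both against $r=2$, where $\sum_{\sigma}\zeta_{\F}^{\star}=\zeta_{\F}(k_1,k_2)+\zeta_{\F}(k_2,k_1)+2\zeta_{\F}(k_1+k_2)$.) Once these combinatorial identities are secured, both halves of the proposition follow from the single input $\zeta_{\F}(k)=0$ together with the induction.
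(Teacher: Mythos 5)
Your proof is correct and takes essentially the same route as the paper, whose own proof is only a one-line sketch---the symmetric sums ``can be written as polynomials of the values $\zeta_{\F}(k)$, which are all zero,'' with details deferred to Hoffman and Saito---and your induction on depth via the iterated harmonic product over ordered set partitions, together with the $\prod_t|B_t|!$-weighted regrouping for the star case, supplies exactly those details. The one input you invoke without proof, the compatibility of $\zeta_{\SSS}$ with the stuffle product (valid under the harmonic regularisation), is part of the same cited background the paper relies on, so nothing essential is missing.
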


\begin{proof}
 Roughly speaking, the sums are zero because they can be written as polynomials of the values $\zeta_{\F}(k)$, which are all zero.
 For details, see \cite[Theorem~2.3]{H} and \cite[Proposition~2.7]{S}, for example.
\end{proof}

We write $\{k\}^r$ for the $r$ times repetition of $k$.

\begin{cor}\label{cor:rep_0}
 For $k,r\in\Z_{\ge1}$, we have
 \[
  \zeta_{\F}(\{k\}^r)=\zeta_{\F}^{\star}(\{k\}^r)=0.
 \]
\end{cor}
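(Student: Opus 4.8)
The plan is to apply Proposition~\ref{prop:sym_sum} to the special index all of whose components equal $k$. First I would take $(k_1,\dots,k_r)=\{k\}^r$ in the proposition. For any permutation $\sigma\in\sym_r$, the permuted index $(k_{\sigma(1)},\dots,k_{\sigma(r)})$ is again $\{k\}^r$, since every component is the same number $k$; the symmetric group acts trivially on a constant sequence. Hence each of the $r!$ summands in the symmetric sum equals $\zeta_{\F}(\{k\}^r)$, and the left-hand side of the proposition collapses to $r!\,\zeta_{\F}(\{k\}^r)$. The proposition asserts this quantity is $0$.

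The conclusion then follows by dividing by $r!$. Since $\zeta_{\F}(\{k\}^r)$ lives in $\A$ or in $\ZZZ/\zeta(2)\ZZZ$, both of which are $\Q$-vector spaces, and $r!$ is a nonzero rational, the relation $r!\,\zeta_{\F}(\{k\}^r)=0$ forces $\zeta_{\F}(\{k\}^r)=0$. The identical argument, with $\zeta_{\F}^{\star}$ in place of $\zeta_{\F}$ and using the star part of Proposition~\ref{prop:sym_sum}, yields $\zeta_{\F}^{\star}(\{k\}^r)=0$. I anticipate no genuine obstacle here: the only point worth flagging is that dividing by $r!$ is legitimate precisely because we work over $\Q$, which is exactly the setting in which the finite and symmetric multiple zeta values are defined.
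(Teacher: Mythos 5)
Your proof is correct and is exactly the paper's argument: apply Proposition~\ref{prop:sym_sum} to the constant index $(\{k\}^r)$, note that the symmetric sum collapses to $r!\,\zeta_{\F}(\{k\}^r)$ (respectively $r!\,\zeta_{\F}^{\star}(\{k\}^r)$), and divide by $r!$, which is legitimate since both $\A$ and $\ZZZ/\zeta(2)\ZZZ$ are $\Q$-vector spaces. The paper compresses all of this into a one-line citation of the proposition; you have merely spelled out the collapsing and the division step, which is a faithful elaboration rather than a different route.
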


\begin{proof}
 Apply Proposition~\ref{prop:sym_sum} to $(k_1,\dots,k_r)=(\{k\}^r)$.
\end{proof}

\begin{defn}
 For each index $\ve{k}$, write its components as sums of ones,
 and define its \emph{Hoffman dual} $\ve{k}^{\vee}$ as the index obtained by swapping plus signs and commas.
\end{defn}

\begin{ex}
 If $\ve{k}=(2,1,3)=(1+1,1,1+1+1)$, then $\ve{k}^{\vee}=(1,1+1+1,1,1)=(1,3,1,1)$.
\end{ex}

The following theorem, known as \emph{duality}, was proved by Hoffman~\cite{H} for the $\F=\A$ case
and by Jarossay~\cite{J} for the $\F=\SSS$ case:
\begin{thm}[Hoffman~\cite{H}, Jarossay~\cite{J}]\label{thm:duality}
 If $\ve{k}$ is a nonempty index, then
 \[
  \zeta_{\F}^{\star}(\ve{k}^{\vee})=-\zeta_{\F}^{\star}(\ve{k}).
 \]
\end{thm}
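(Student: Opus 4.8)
The plan is to reduce the statement to a single linear identity among the non-star values $\zeta_{\F}$ of a fixed weight, and then to supply the genuinely arithmetic (resp.\ analytic) input that distinguishes the two cases.

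The first step is the elementary \emph{coarsening relation}: for an index $\ve{k}=(k_1,\dots,k_r)$,
\[
 \zeta_{\F}^{\star}(\ve{k})=\sum_{\ve{k}'}\zeta_{\F}(\ve{k}'),
\]
where $\ve{k}'$ runs over all indices obtained from $\ve{k}$ by merging some blocks of consecutive components into their sums. For $\F=\A$ this follows at once by splitting each $m_i\ge m_{i+1}$ in the defining sum into the cases $m_i>m_{i+1}$ and $m_i=m_{i+1}$; for $\F=\SSS$ the same identity holds for the (harmonically regularised) star values appearing in the definition of $\zeta_{\SSS}^{\star}$ and descends to $\ZZZ/\zeta(2)\ZZZ$. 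Encoding an index of weight $k$ by the subset of the $k-1$ gaps between consecutive $1$'s at which a comma sits, a coarsening of $\ve{k}$ is exactly a subset of the comma set of $\ve{k}$, whereas the Hoffman dual $\ve{k}^{\vee}$ is the index whose comma set is the complementary subset. Thus the coarsening relation writes both $\zeta_{\F}^{\star}(\ve{k})$ and $\zeta_{\F}^{\star}(\ve{k}^{\vee})$ as explicit sums of non-star values, and the assertion becomes one linear relation among the $\zeta_{\F}(\ve{k}')$.

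Some of the resulting terms are immediately controlled: the fully merged term $\zeta_{\F}(k)$ and the repeated-component terms vanish by Corollary~\ref{cor:rep_0}, and antisymmetric pairs cancel by Proposition~\ref{prop:sym_sum}. These relations alone do not close the argument, however. Already for $\ve{k}=(2,2)$, whose dual is $(1,2,1)$, the reduction collapses to the lone requirement $\zeta_{\F}(1,2,1)=0$, which is not a formal consequence of the symmetric-sum and repetition vanishing; it does follow from the sum formula (Theorem~\ref{thm:sum_formula}), but only because that particular sum happens to have a single term. So the real content lies in an identity that sees the finer structure of $\zeta_{\F}$, and I would prove it case by case.

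For $\F=\A$ I would establish the identity prime by prime inside $\FF_p$, following Hoffman: package the truncated sums $\sum_{p>m_1>\dots>m_r\ge1}\prod_i m_i^{-k_i}$ as values of quasi-symmetric functions at $1,\tfrac12,\dots,\tfrac1{p-1}$, and exploit the reflection $m\mapsto p-m$ together with Wilson's congruence $\prod_{m=1}^{p-1}(X-m)\equiv X^{p-1}-1\pmod p$; complementation of comma sets is then realised as an involution of the generating function, out of which the factor $-1$ emerges. For $\F=\SSS$ I would instead use the iterated-integral (shuffle) representation of the regularised multiple zeta values and the substitution $t\mapsto1-t$, turning the Hoffman dual into the appropriate reversal of the integration word, which is essentially Jarossay's argument. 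The main obstacle is that there is no single mechanism covering both cases: the finite values admit no integral model, so the shuffle proof is unavailable for $\A$, and one must run the delicate mod-$p$ computation separately, where matching coefficients across the reflection while controlling the truncation at $p$ is the crux. (Alternatively, a uniform proof can be given by the connected-sum method, introducing a connector and a transport relation valid in both settings, at the cost of a longer setup.)
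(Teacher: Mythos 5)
You should first note that the paper contains no proof of Theorem~\ref{thm:duality} to compare against: the theorem is imported wholesale, with the $\F=\A$ case credited to Hoffman \cite{H} and the $\F=\SSS$ case to Jarossay \cite{J}, and the only internal content is the remark that Corollary~\ref{prop:phi_dual} is combinatorially equivalent to it. So the question is whether your outline would stand on its own, and it would not yet. Your opening reduction is correct and is indeed the standard one: the coarsening relation $\zeta_{\F}^{\star}(\ve{k})=\sum_{\ve{l}\preceq\ve{k}}\zeta_{\F}(\ve{l})$, combined with the encoding of indices by comma sets and the fact that Hoffman dualisation is complementation of comma sets, turns the claim into a single linear relation among non-star values. (For $\F=\SSS$ even this step needs more care than you give it: the coarsening identity for regularised star values must be checked to be compatible with the \emph{products} of two star values occurring in the definition of $\zeta_{\SSS}^{\star}$; this is true and easy, but you assert rather than verify it.) Your $(2,2)$ example is accurate and genuinely instructive: it shows that Proposition~\ref{prop:sym_sum} and Corollary~\ref{cor:rep_0} cannot formally close the argument, which is consistent with the paper's decision to quote the theorem rather than derive it from its other lemmas.

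The genuine gap is that from this point on you name the two hard steps instead of performing them. For $\F=\A$, your description of Hoffman's method is fair at the level of slogans — the evaluation of quasi-symmetric functions at $1,\tfrac12,\dots,\tfrac{1}{p-1}$ together with Wilson-type congruences such as $\prod_{m=1}^{p-1}(1+t/m)\equiv 1-t^{p-1}\pmod{p}$ is indeed the engine — but the actual content, namely the involution on the quasi-symmetric side that realises comma-set complementation and the verification that all truncation terms vanish mod $p$, is precisely what you defer, and it constitutes the entire proof. For $\F=\SSS$ your gloss is moreover doubtful as stated: the substitution $t\mapsto 1-t$ in the iterated-integral representation yields the \emph{classical} duality of multiple zeta values, not Hoffman's star-duality, and passing from one to the other is nontrivial; Jarossay's note, as its title indicates, works through the double shuffle (\emph{double m\'elange}) structure of symmetrised multizetas, and the delicate points — the choice and comparison of regularisations and the reduction modulo $\zeta(2)\ZZZ$ — are exactly what your one-sentence summary skips. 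In short: correct strategy, correct and worthwhile reduction, but the two case-by-case cruxes are asserted rather than proved, and those are exactly the parts for which the paper cites \cite{H} and \cite{J}.
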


For indices $\ve{k}$ and $\ve{l}$ of the same weight,
we write $\ve{k}\preceq\ve{l}$ to mean that, writing their components as sums of ones,
we can obtain $\ve{l}$ from $\ve{k}$ by replacing some (possibly none) of the plus signs with commas.
For example, $(2,1,3)=(1+1,1,1+1+1)\preceq(1,1,1,1+1,1)=(1,1,1,2,1)$.

\begin{cor}\label{prop:phi_dual}
 If $\ve{k}$ is a nonempty index of depth $r$, then
 \[
  (-1)^r\zeta_{\F}(\ve{k})=\sum_{\ve{l}\succeq\ve{k}}\zeta_{\F}(\ve{l}).
 \]
\end{cor}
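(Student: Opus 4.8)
The plan is to reduce this non-star identity to a statement about star values, where Theorem~\ref{thm:duality} can be applied. The bridge is the elementary star--non-star relation
\[
 \zeta_{\F}^{\star}(\ve{j})=\sum_{\ve{m}\preceq\ve{j}}\zeta_{\F}(\ve{m}),
\]
valid for every index $\ve{j}$; it follows directly from the definitions by splitting each ``$\ge$'' into ``$>$'' or ``$=$'' and merging the components joined by ``$=$''. The coarsenings $\ve{m}\preceq\ve{j}$ of a fixed $\ve{j}$ form a Boolean lattice, so Möbius inversion yields
\[
 \zeta_{\F}(\ve{j})=\sum_{\ve{m}\preceq\ve{j}}(-1)^{d(\ve{j})-d(\ve{m})}\zeta_{\F}^{\star}(\ve{m}),
\]
where $d(\cdot)$ denotes depth. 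I would first record these two relations, checking that they survive passage to $\F=\SSS$.

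Next I would substitute the inverted relation into the left-hand side and interchange the order of summation, so that the coefficient of each $\zeta_{\F}^{\star}(\ve{m})$ becomes $(-1)^{-d(\ve{m})}\sum(-1)^{d(\ve{l})}$, the inner sum ranging over all common refinements $\ve{l}$ of both $\ve{k}$ and $\ve{m}$. This inner sum factors over the blocks of the coarsest common refinement of $\ve{k}$ and $\ve{m}$ (the index whose commas are those of $\ve{k}$ together with those of $\ve{m}$), and over a block of size $s$ it reduces to $\sum_{j}\binom{s-1}{j-1}(-1)^{j}=-(1-1)^{s-1}$, which vanishes for $s\ge2$ and equals $-1$ for $s=1$. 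Hence only the all-ones common refinement survives, that is, only those $\ve{m}$ with $\ve{m}\succeq\ve{k}^{\vee}$ contribute, and one is left with
\[
 \sum_{\ve{l}\succeq\ve{k}}\zeta_{\F}(\ve{l})=(-1)^{n}\sum_{\ve{m}\succeq\ve{k}^{\vee}}(-1)^{d(\ve{m})}\zeta_{\F}^{\star}(\ve{m}),
\]
where $n$ is the weight.

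At this point Theorem~\ref{thm:duality} enters: replacing $\zeta_{\F}^{\star}(\ve{m})$ by $-\zeta_{\F}^{\star}(\ve{m}^{\vee})$ and putting $\ve{p}=\ve{m}^{\vee}$ converts the sum over $\ve{m}\succeq\ve{k}^{\vee}$ into a sum over $\ve{p}\preceq\ve{k}$, using that the Hoffman dual reverses $\preceq$ and sends a weight-$n$ index of depth $d$ to one of depth $n+1-d$. All the accumulated signs collapse to $+1$, giving $\sum_{\ve{l}\succeq\ve{k}}\zeta_{\F}(\ve{l})=\sum_{\ve{p}\preceq\ve{k}}(-1)^{d(\ve{p})}\zeta_{\F}^{\star}(\ve{p})$. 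Since the inverted star--non-star relation, multiplied by $(-1)^{r}$, reads exactly $(-1)^{r}\zeta_{\F}(\ve{k})=\sum_{\ve{p}\preceq\ve{k}}(-1)^{d(\ve{p})}\zeta_{\F}^{\star}(\ve{p})$, the claim follows.

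The conceptual content is light; the main obstacle is the bookkeeping. One must track the signs $(-1)^{d(\cdot)}$ through both the interchange of summation and the Hoffman-dual reindexing, and verify the combinatorial heart of the argument, namely that the alternating depth-sum over common refinements is supported precisely on the all-ones refinement. A secondary point requiring care is that the star--non-star relation and its inverse must be justified for $\F=\SSS$, where $\zeta_{\SSS}$ is defined only modulo $\zeta(2)\ZZZ$ and through regularisation.
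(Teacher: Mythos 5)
Your proposal is correct and takes essentially the same approach as the paper: the paper's proof consists precisely of the remark that the corollary is equivalent to Theorem~\ref{thm:duality} by an easy combinatorial argument, deferring the details to \cite[Corollary~2.15]{S}, and your star/non-star dictionary $\zeta_{\F}^{\star}(\ve{j})=\sum_{\ve{m}\preceq\ve{j}}\zeta_{\F}(\ve{m})$ together with M\"obius inversion and the Hoffman-dual reindexing is exactly that argument written out in full (your signs and the vanishing of the alternating sum over common refinements all check out). You also rightly flag the one point the slick version glosses over, namely that the star/non-star relation must be verified for $\F=\SSS$, where it follows from the corresponding relation for regularised multiple zeta(-star) values rather than from a single series manipulation.
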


\begin{proof}
 An easy combinatorial argument shows that this corollary is equivalent to Theorem~\ref{thm:duality};
 see \cite[Corollary~2.15]{S} for details.
\end{proof}

We adopt the standard convention for binomial coefficients that $\binom{a}{b}=0$ if $a\in\Z_{\ge0}$ and $b\in\Z\setminus\{0,\dots,a\}$.
For notational simplicity, we write
\[
 \sbinom{a}{b}=(-1)^b\binom{a}{b}
\]
for $a\in\Z_{\ge0}$ and $b\in\Z$
(not to be confused with the Stirling numbers of the first kind).
Then Theorem~\ref{thm:sum_formula} can be rewritten as follows:
\begin{thm}[Another form of Theorem~\ref{thm:sum_formula}]\label{thm:sum_formula2}
 For $k,r,i\in\Z$ with $1\le i\le r\le k-1$, we have
 \begin{align*}
  \sum_{\substack{k_1+\dots+k_r=k\\k_i\ge2}}\zeta_{\F}(k_1,\dots,k_r)
  &=(-1)^r\sum_{\substack{k_1+\dots+k_r=k\\k_i\ge2}}\zeta_{\F}^{\star}(k_1,\dots,k_r)\\
  &=\biggl(\sbinom{k-1}{i-1}-\sbinom{k-1}{r-i}\biggr)\ZZ_{\F}(k).
 \end{align*}
\end{thm}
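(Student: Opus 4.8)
The plan is to show that Theorem~\ref{thm:sum_formula2} and Theorem~\ref{thm:main} are two forms of the same assertion, so I only need to prove the coefficient identity. The key observation is that Theorem~\ref{thm:sum_formula} already contains all the analytic/algebraic content (the fact that the various sums live in $\Q\ZZ_{\F}(k)$ and the relation between the star and non-star versions); what remains is purely a matter of rewriting the explicit coefficient
\[
 (-1)^{i-1}\biggl(\binom{k-1}{i-1}+(-1)^r\binom{k-1}{r-i}\biggr)
\]
from Theorem~\ref{thm:sum_formula} in terms of the bracket symbol $\sbinom{a}{b}=(-1)^b\binom{a}{b}$.

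First I would unwind the definition of the bracket notation in the claimed right-hand side. We have $\sbinom{k-1}{i-1}=(-1)^{i-1}\binom{k-1}{i-1}$ and $\sbinom{k-1}{r-i}=(-1)^{r-i}\binom{k-1}{r-i}$, so
\[
 \sbinom{k-1}{i-1}-\sbinom{k-1}{r-i}
 =(-1)^{i-1}\binom{k-1}{i-1}-(-1)^{r-i}\binom{k-1}{r-i}.
\]
Next I would factor out $(-1)^{i-1}$ from both terms. For the second term, I use that
\[
 (-1)^{r-i}=(-1)^{i-1}\cdot(-1)^{r-2i+1}=(-1)^{i-1}\cdot(-1)^{r+1}=-(-1)^{i-1}(-1)^{r},
\]
where the middle equality holds because $(-1)^{-2i+2}=1$. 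Substituting, the expression becomes
\[
 (-1)^{i-1}\binom{k-1}{i-1}-\bigl(-(-1)^{i-1}(-1)^{r}\bigr)\binom{k-1}{r-i}
 =(-1)^{i-1}\biggl(\binom{k-1}{i-1}+(-1)^{r}\binom{k-1}{r-i}\biggr).
\]
This is exactly the coefficient appearing in Theorem~\ref{thm:sum_formula}, so the two right-hand sides agree.

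Since the left-hand sides of Theorem~\ref{thm:sum_formula} and Theorem~\ref{thm:sum_formula2} are identical, and the chain of equalities linking the $\zeta_{\F}$ sum to the $\zeta_{\F}^{\star}$ sum is carried over verbatim, the equivalence of the two theorems follows immediately. There is no genuine obstacle here: the entire argument is a short sign-tracking computation, and the only point requiring care is the parity bookkeeping in the exponent $(-1)^{r-i}$ versus $(-1)^{i-1}(-1)^{r}$, which I have checked reduces to the identity $(-1)^{-2i+2}=1$. Thus the reformulation is valid, and Theorem~\ref{thm:sum_formula2} holds.
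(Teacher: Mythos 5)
Your sign computation is correct and coincides with the paper's implicit justification: the paper offers no separate proof of Theorem~\ref{thm:sum_formula2}, treating it as exactly this rewriting of the coefficient in Theorem~\ref{thm:sum_formula} via $\sbinom{a}{b}=(-1)^b\binom{a}{b}$, and your parity bookkeeping $(-1)^{r-i}=-(-1)^{i-1}(-1)^r$ is the whole content. One cosmetic slip: your opening sentence says you are relating Theorem~\ref{thm:sum_formula2} to Theorem~\ref{thm:main}, but you clearly mean Theorem~\ref{thm:sum_formula}, as the rest of your argument correctly does.
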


\begin{lem}\label{lem:zeta_1_2_1}
 For $a,b\in\Z_{\ge0}$ with $a+b$ odd, we have
 \[
  \zeta_{\F}(\{1\}^a,2,\{1\}^b)=-\sbinom{a+b+2}{a+1}\ZZ_{\F}(a+b+2)=\sbinom{a+b+2}{b+1}\ZZ_{\F}(a+b+2).
 \]
\end{lem}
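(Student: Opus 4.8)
The plan is to derive the lemma in one stroke from the restricted sum formula (Theorem~\ref{thm:sum_formula2}) by applying it at the maximal possible depth, where the summation degenerates to a single term. Write $k=a+b+2$ for the weight; since $a+b$ is odd we have $k\ge3$ odd, so that $\ZZ_{\F}(k)$ is defined. I would set $r=k-1$ and $i=a+1$, and check that the hypotheses $1\le i\le r\le k-1$ of Theorem~\ref{thm:sum_formula2} hold: here $r=k-1$ and $i=a+1\le a+b+1=r$. The crucial observation is that an index $(k_1,\dots,k_{k-1})$ of depth $k-1$ and weight $k$ with $k_i\ge2$ must in fact satisfy $k_i=2$ and $k_j=1$ for all $j\ne i$, because the $k-2$ components other than the $i$-th already contribute at least $k-2$ to the weight, forcing $k_i\le2$. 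Hence the left-hand side of Theorem~\ref{thm:sum_formula2} collapses to the single term $\zeta_{\F}(\{1\}^a,2,\{1\}^b)$, and, since $i-1=a$ and $r-i=b$, the formula reads
\[
 \zeta_{\F}(\{1\}^a,2,\{1\}^b)=\biggl(\sbinom{k-1}{a}-\sbinom{k-1}{b}\biggr)\ZZ_{\F}(k).
\]

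It then remains to match this with the two expressions in the statement, which is pure binomial bookkeeping. Using $a+b=k-2$ I would rewrite $\binom{k-1}{b}=\binom{k-1}{a+1}$, and since $k$ is odd the parity identity $(-1)^b=-(-1)^a$ converts the difference of signed binomials into a sum, after which Pascal's rule finishes the job:
\[
 \sbinom{k-1}{a}-\sbinom{k-1}{b}=(-1)^a\bigl(\tbinom{k-1}{a}+\tbinom{k-1}{a+1}\bigr)=(-1)^a\binom{k}{a+1}.
\]
Because $(-1)^a\binom{k}{a+1}=-\sbinom{k}{a+1}$, this yields the first claimed form. The second form $\sbinom{k}{b+1}\ZZ_{\F}(k)$ then follows from the symmetry $\binom{k}{a+1}=\binom{k}{b+1}$ together with the parity relation $(-1)^{a}=(-1)^{b+1}$, which is again exactly the hypothesis that $a+b$ be odd.

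The only real step is the first one: recognising that at depth $k-1$ the restricted sum formula isolates a single value. Once this collapse is spotted, the remainder is a routine manipulation of binomial coefficients, and the assumption that $a+b$ be odd enters precisely to make the two closed forms coincide and to produce the stated coefficient; for even weight both sides would simply vanish since $\ZZ_{\F}(k)=0$. I therefore expect no genuine obstacle beyond this observation, and the proof should be short and entirely self-contained within the tools already available.
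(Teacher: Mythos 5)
Your proof is correct and takes essentially the same route as the paper: both apply Theorem~\ref{thm:sum_formula2} with $k=a+b+2$, $r=a+b+1$, $i=a+1$, and then combine the two signed binomials via the parity of $a+b$ and Pascal's rule to get $-\sbinom{a+b+2}{a+1}=\sbinom{a+b+2}{b+1}$. The only difference is cosmetic: you spell out why the depth-$(k-1)$ sum collapses to the single term $\zeta_{\F}(\{1\}^a,2,\{1\}^b)$, a point the paper leaves implicit.
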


\begin{proof}
 Applying Theorem~\ref{thm:sum_formula2} to $k=a+b+2$, $r=a+b+1$, and $i=a+1$ gives
 \[
  \zeta_{\F}(\{1\}^a,2,\{1\}^b)=\biggl(\sbinom{a+b+1}{a}-\sbinom{a+b+1}{b}\biggr)\ZZ_{\F}(a+b+2),
 \]
 and we have
 \begin{align*}
  \sbinom{a+b+1}{a}-\sbinom{a+b+1}{b}
  &=(-1)^{a}\binom{a+b+1}{a}-(-1)^b\binom{a+b+1}{b}\\
  &=-(-1)^{a+1}\biggl(\binom{a+b+1}{a}+\binom{a+b+1}{a+1}\biggr)\\
  &=-(-1)^{a+1}\binom{a+b+2}{a+1}\\
  &=-\sbinom{a+b+2}{a+1}.
 \end{align*}
 By a similar reasoning, we also have
 \[
  \sbinom{a+b+1}{a}-\sbinom{a+b+1}{b}=\sbinom{a+b+2}{b+1}.\qedhere
 \]
\end{proof}

\begin{lem}\label{lem:acb}
 For $a,b\in\Z_{\ge0}$ and $c\in\Z_{\ge-1}$ with $a+b+c$ odd, we have
 \[
  \zeta_{\F}(\{1\}^a,2,\{1\}^c,2,\{1\}^b)=\frac{1}{2}\biggl(\sbinom{a+b+c+4}{a+1}-\sbinom{a+b+c+4}{b+1}\biggr)\ZZ_{\F}(a+b+c+4),
 \]
 where we understand that $\zeta_{\F}(\{1\}^a,2,\{1\}^{-1},2,\{1\}^b)=\zeta_{\F}(\{1\}^a,3,\{1\}^b)$.
\end{lem}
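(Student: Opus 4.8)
The plan is to apply the Hoffman-duality relation of Corollary~\ref{prop:phi_dual} \emph{directly} to the index $\ve{k}=(\{1\}^a,2,\{1\}^c,2,\{1\}^b)$ and to exploit the fact that the resulting identity is self-referential: the value $\zeta_{\F}(\ve{k})$ reappears on the right-hand side, so that after evaluating the remaining, simpler terms one can solve for it. The point is that an index of this very special shape has almost no internal plus signs, so the sum over refinements $\ve{l}\succeq\ve{k}$ collapses to a handful of terms, all of which are either zero or already computed in Lemma~\ref{lem:zeta_1_2_1}.

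Concretely, write $w=a+b+c+4$ for the weight and $r=a+b+c+2$ for the depth of $\ve{k}$. The only internal plus signs of $\ve{k}$ are the two hidden inside the two $2$'s (when $c=-1$ they both sit inside the single $3$), so there are exactly $2^2=4$ indices $\ve{l}\succeq\ve{k}$, obtained by independently splitting each $2$ into $1,1$: namely $\ve{k}$ itself, $(\{1\}^{a+c+2},2,\{1\}^b)$, $(\{1\}^a,2,\{1\}^{c+b+2})$, and $(\{1\}^w)$. Since $a+b+c$ is odd we have $r=w-2$ odd, so Corollary~\ref{prop:phi_dual} reads
\[
 -\zeta_{\F}(\ve{k})=\zeta_{\F}(\ve{k})+\zeta_{\F}(\{1\}^{a+c+2},2,\{1\}^b)+\zeta_{\F}(\{1\}^a,2,\{1\}^{c+b+2})+\zeta_{\F}(\{1\}^w).
\]

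Next I would evaluate the remaining terms with the results already in hand. The last term vanishes by Corollary~\ref{cor:rep_0}. Each of the two middle terms is a single-$2$ value whose two exponent counts sum to $(a+b+c)+2$, which is odd, so Lemma~\ref{lem:zeta_1_2_1} applies: its second form gives $\zeta_{\F}(\{1\}^{a+c+2},2,\{1\}^b)=\sbinom{w}{b+1}\ZZ_{\F}(w)$, and its first form gives $\zeta_{\F}(\{1\}^a,2,\{1\}^{c+b+2})=-\sbinom{w}{a+1}\ZZ_{\F}(w)$. Substituting these and collecting the two copies of $\zeta_{\F}(\ve{k})$ yields $-2\zeta_{\F}(\ve{k})=\bigl(\sbinom{w}{b+1}-\sbinom{w}{a+1}\bigr)\ZZ_{\F}(w)$, which rearranges at once to the asserted identity with $w=a+b+c+4$.

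The step demanding the most care is the combinatorial bookkeeping rather than any computation: one must confirm that there are \emph{precisely} four refinements $\ve{l}\succeq\ve{k}$ and identify them correctly, in particular checking that the degenerate case $c=-1$, in which the two $2$'s coalesce into a single $3$, produces exactly the same four indices (the two internal pluses of the $3$ playing the role of the two pluses of the two $2$'s). One must also keep track of the sign $(-1)^r$, since it is exactly the oddness of $r$ that makes $\zeta_{\F}(\ve{k})$ survive with a nontrivial coefficient on the right, turning the identity into a solvable equation rather than a tautology. Everything after that is the routine parity verification needed to invoke Lemma~\ref{lem:zeta_1_2_1} together with elementary manipulation of the symbols $\sbinom{\cdot}{\cdot}$.
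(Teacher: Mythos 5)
Your proof is correct and takes essentially the same route as the paper's: you apply Corollary~\ref{prop:phi_dual} to $(\{1\}^a,2,\{1\}^c,2,\{1\}^b)$, kill the all-ones refinement via Corollary~\ref{cor:rep_0}, evaluate the two single-$2$ refinements with Lemma~\ref{lem:zeta_1_2_1}, and solve the resulting self-referential equation using the oddness of the depth. Your explicit check of the four refinements in the degenerate case $c=-1$ matches the paper's remark that the identity holds no matter whether $c=-1$ or $c\ge0$.
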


\begin{proof}
 Keeping Corollary~\ref{cor:rep_0} in mind, we apply Proposition~\ref{prop:phi_dual} to $\ve{k}=(\{1\}^a,2,\{1\}^c,2,\{1\}^b)$ to get
 \begin{align*}
  &-\zeta_{\F}(\{1\}^a,2,\{1\}^c,2,\{1\}^b)\\
  &\qquad=\zeta_{\F}(\{1\}^a,2,\{1\}^c,2,\{1\}^b)+\zeta_{\F}(\{1\}^a,2,\{1\}^{b+c+2})+\zeta_{\F}(\{1\}^{a+c+2},2,\{1\}^b),
 \end{align*}
 no matter whether $c=-1$ or $c\ge0$.
 This, together with Lemma~\ref{lem:zeta_1_2_1}, gives
 \begin{align*}
  &\zeta_{\F}(\{1\}^a,2,\{1\}^c,2,\{1\}^b)\\
  &\qquad=-\frac{1}{2}(\zeta_{\F}(\{1\}^a,2,\{1\}^{b+c+2})+\zeta_{\F}(\{1\}^{a+c+2},2,\{1\}^b))\\
  &\qquad=\frac{1}{2}\biggl(\sbinom{a+b+c+4}{a+1}-\sbinom{a+b+c+4}{b+1}\biggr)\ZZ_{\F}(a+b+c+4).\qedhere
 \end{align*}
\end{proof}

\section{Proof of the main theorem}
Throughout this section, let $k$ be an odd integer with $k\ge3$, and let $r$, $i$, $j$ be integers with $1\le i\le j\le r\le k-2$.
Set
\[
 I_{k,r,i,j}=
 \begin{cases}
  \{(k_1,\dots,k_r)\in\Z_{\ge1}^r\mid k_i\ge3\}&\text{if $i=j$;}\\
  \{(k_1,\dots,k_r)\in\Z_{\ge1}^r\mid k_i,k_j\ge2\}&\text{if $i<j$,} 
 \end{cases}
\]
and write
\[
 S_{k,r,i,j}=\sum_{\ve{k}\in I_{k,r,i,j}}\zeta_{\F}(\ve{k}),\qquad
 S_{k,r,i,j}^{\star}=\sum_{\ve{k}\in I_{k,r,i,j}}\zeta_{\F}^{\star}(\ve{k}).
\]
For notational simplicity, we put $i'=j-i+1$, $i''=r-j+1$, and $k'=k-r-2$, so that $i+i'+i''+k'=k$.

The aim of this section is to prove the following theorem, from which Theorem~\ref{thm:main} easily follows:
\begin{thm}\label{thm:main_explict}
 We have
 \[
  S_{k,r,i,j}=(-1)^rS_{k,r,i,j}^{\star}=\frac{1}{2}N_{k,r,i,j}\ZZ_{\F}(k).
 \]
 where $N_{k,r,i,j}$ is an integer given by
 \begin{align*}
  N_{k,r,i,j}
  &=(k'+i+1)\biggl(\sbinom{k-1}{k'+i}-\sbinom{k-1}{i-1}\biggr)-(k'+i''+1)\biggl(\sbinom{k-1}{k'+i''}-\sbinom{k-1}{i''-1}\biggr)\\
  &\qquad+k\biggl(\sbinom{k-2}{k'+i-1}-\sbinom{k-2}{i-2}-\sbinom{k-2}{k'+i''-1}+\sbinom{k-2}{i''-2}\biggr).
 \end{align*}
\end{thm}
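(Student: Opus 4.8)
The plan is to isolate the two restricted sums from the ordinary sum formula by inclusion--exclusion, using the vanishing of the completely unrestricted fixed-depth sum. First I record that for each fixed depth the sum of $\zeta_{\F}(\ve{k})$ over \emph{all} indices $\ve{k}$ of weight $k$ and depth $r$ is $0$: grouping these indices into $\sym_r$-orbits, each orbit sums to $0$ by Proposition~\ref{prop:sym_sum}. Writing $\mathrm{sf}_i=\sum_{k_1+\dots+k_r=k,\,k_i\ge2}\zeta_{\F}(\ve{k})$ for the value supplied by Theorem~\ref{thm:sum_formula2}, this gives $\sum_{k_i=1}\zeta_{\F}(\ve{k})=-\mathrm{sf}_i$. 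Hence, in the case $i=j$,
\[
 S_{k,r,i,i}=\sum_{k_i\ge2}\zeta_{\F}(\ve{k})-\sum_{k_i=2}\zeta_{\F}(\ve{k})=\mathrm{sf}_i-A_i,\qquad A_i:=\sum_{k_1+\dots+k_r=k,\,k_i=2}\zeta_{\F}(\ve{k}),
\]
while for $i<j$, inclusion--exclusion on the events $\{k_i=1\}$ and $\{k_j=1\}$ gives
\[
 S_{k,r,i,j}=\mathrm{sf}_i+\mathrm{sf}_j+B_{ij},\qquad B_{ij}:=\sum_{k_1+\dots+k_r=k,\,k_i=k_j=1}\zeta_{\F}(\ve{k}).
\]
Everything thus reduces to evaluating the two auxiliary sums $A_i$ and $B_{ij}$ as rational multiples of $\ZZ_{\F}(k)$.

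For the auxiliary sums I would bring in Hoffman duality so as to connect them to the explicit building-block values of Lemmas~\ref{lem:zeta_1_2_1} and~\ref{lem:acb}, i.e.\ the indices whose components are all $1$ except for a single $2$ or $3$, or except for two $2$'s. Concretely, I would pass to the Hoffman duals via Theorem~\ref{thm:duality}, expand the resulting star-values over their coarsenings (equivalently, invoke Corollary~\ref{prop:phi_dual}), and then close the system: the expansions also produce indices that are \emph{not} building blocks (for instance depth-two indices with no component equal to $1$), and these must be eliminated by combining the fixed-depth vanishing above with Theorem~\ref{thm:sum_formula2}. The point is that a forced $2$ (for $A_i$) or a forced pair of $1$'s (for $B_{ij}$) can sit in a whole range of positions relative to the $i$-th and $j$-th slots, controlled by the excess weight $k'=k-r-2$ and by $i$ or $i''=r-j+1$; each building block is therefore produced with a multiplicity linear in the free parameters. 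Summing Lemmas~\ref{lem:zeta_1_2_1} and~\ref{lem:acb} against these multiplicities is what manufactures the linear factors $k'+i+1$, $k'+i''+1$, and $k$, while the lemmas contribute the binomials $\sbinom{k-1}{\cdot}$ and $\sbinom{k-2}{\cdot}$.

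Finally I would substitute $A_i$ and $B_{ij}$ into the two reductions above and simplify. The outcome is visibly of the form $\tfrac12\bigl(P(i)-P(i'')\bigr)\ZZ_{\F}(k)$ for a single function $P$, reflecting the symmetry between reading an index from the left (through $i$) and from the right (through $i''$); matching this against $N_{k,r,i,j}$ is then a finite manipulation with Pascal's rule of the kind already carried out in the proof of Lemma~\ref{lem:zeta_1_2_1}. The companion identity $S_{k,r,i,j}=(-1)^rS_{k,r,i,j}^{\star}$ I would obtain by rerunning the whole argument for star-values, the inputs changing only to the star-halves of Proposition~\ref{prop:sym_sum} and Theorem~\ref{thm:sum_formula2} together with Theorem~\ref{thm:duality} (native to star); the star building blocks are rewritten in terms of the non-star ones of Lemmas~\ref{lem:zeta_1_2_1} and~\ref{lem:acb} through Corollary~\ref{prop:phi_dual}. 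I expect the genuine obstacle to be the middle step: duality alone does not express $A_i$ and $B_{ij}$ in terms of the lemmas, so the real content lies in closing the linear relations among these constrained sums and then performing the binomial bookkeeping that collapses the answer onto $\Q\ZZ_{\F}(k)$ and reproduces the precise coefficient $N_{k,r,i,j}$.
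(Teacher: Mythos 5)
Your opening reductions are correct: the fixed-depth total $\sum_{k_1+\dots+k_r=k}\zeta_{\F}(\ve{k})=0$ does follow by orbit-summing Proposition~\ref{prop:sym_sum}, and hence $S_{k,r,i,i}=\mathrm{sf}_i-A_i$ and $S_{k,r,i,j}=\mathrm{sf}_i+\mathrm{sf}_j+B_{ij}$ as you claim. But the proof stops exactly where the theorem's content begins: $A_i$ and $B_{ij}$ are never evaluated, and you concede this yourself (``the genuine obstacle\dots the real content lies in closing the linear relations''). The concession is not a formality \emph{---} the gap is real. Inclusion--exclusion supplies only one linear relation tying $S_{k,r,i,j}$ to the auxiliary sum, so it cannot determine both; and Hoffman duality in either form (Theorem~\ref{thm:duality} termwise, or Corollary~\ref{prop:phi_dual}) applied to the constrained families $\{k_i=2\}$ or $\{k_i=k_j=1\}$ scatters them over refinements with position-dependent multiplicities that the building-block Lemmas~\ref{lem:zeta_1_2_1} and~\ref{lem:acb} do not control; nothing in your sketch shows how Theorem~\ref{thm:sum_formula2} plus the fixed-depth vanishing closes this system, and it is not evident that it can. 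The missing idea is Oyama's Ohno-type relation (Theorem~\ref{thm:oyama}), which you never invoke. The paper applies it to the index $(\{1\}^{i-1},3,\{1\}^{r-i})$ (resp.\ the index with two $2$'s) to convert $S_{k,r,i,j}$ in one stroke, with no inclusion--exclusion and no auxiliary sums, into
\[
 \sum_{\substack{e_1'+e_2'+e_3'=k'\\e_1',e_2',e_3'\ge0}}\zeta_{\F}\bigl((i+e_1',i'+e_2',i''+e_3')^{\vee}\bigr),
\]
a sum of values $\zeta_{\F}(\{1\}^a,2,\{1\}^c,2,\{1\}^b)$ each given by Lemma~\ref{lem:acb}; the linear factors $k'+i+1$, $k'+i''+1$, $k$ then arise from a telescoping evaluation of $\sum_{e}\sbinom{k}{i+e}$ with weight $k'-e+1$ (Lemma~\ref{lem:sum_signed_binom}), not from multiplicity counts of building blocks as you predict.

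The companion equality $S_{k,r,i,j}=(-1)^rS_{k,r,i,j}^{\star}$ is likewise left unexecuted in your plan, and ``rerunning the whole argument for star'' inherits the same missing tool: Theorem~\ref{thm:oyama} is stated for $\zeta_{\F}$ only, and star analogues of Lemmas~\ref{lem:zeta_1_2_1} and~\ref{lem:acb} are nowhere available, so the star rerun would face the identical obstruction plus extra coarsening terms from converting star to non-star values. The paper instead proves this equality directly and independently of the computation of $S_{k,r,i,j}$: it sums the antipode relation (Proposition~\ref{prop:antipode}) over $I_{k,r,i,j}$, identifies the extreme terms as $S_{k,r,i,j}$ and $-S_{k,r,i,j}^{\star}$ via Proposition~\ref{prop:reversal}, and kills every middle term $A_s$ either by Proposition~\ref{prop:sym_sum} or, for $i\le s\le j-1$, by Theorem~\ref{thm:sum_formula2} together with the observation that $\ZZ_{\F}(l)\ZZ_{\F}(k-l)=0$ when $k$ is odd \emph{---} the one place the parity hypothesis enters that part of the argument. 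To repair your proof you would need either to adopt Theorem~\ref{thm:oyama} (at which point the inclusion--exclusion scaffolding becomes unnecessary) or to prove an equivalent identity evaluating $A_i$ and $B_{ij}$, which is a genuinely new ingredient, not bookkeeping.
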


\subsection{Proof that $S_{k,r,i,j}=(-1)^rS_{k,r,i,j}^{\star}$}
In this subsection, we shall prove that $S_{k,r,i,j}=(-1)^rS_{k,r,i,j}^{\star}$ (Lemma~\ref{lem:S_krij_star}).

\begin{prop}\label{prop:reversal}
 If $(k_1,\dots,k_r)$ is an index, then
 \begin{align*}
  \zeta_{\F}(k_r,\dots,k_1)&=(-1)^{k_1+\dots+k_r}\zeta_{\F}(k_1,\dots,k_r),\\
  \zeta_{\F}^{\star}(k_r,\dots,k_1)&=(-1)^{k_1+\dots+k_r}\zeta_{\F}^{\star}(k_1,\dots,k_r).
 \end{align*}
\end{prop}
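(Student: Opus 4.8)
The plan is to treat the two cases $\F=\A$ and $\F=\SSS$ separately, since each reduces to an elementary reindexing once the right substitution is identified. This is a standard reflection/reversal symmetry, so I expect no conceptual difficulty, only bookkeeping.

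For $\F=\A$ I would argue prime by prime. Fix a prime $p$ and consider the representative $\sum_{p>m_1>\dots>m_r\ge1} m_1^{-k_1}\dotsm m_r^{-k_r}\bmod p$ of $\zeta_{\A}(k_1,\dots,k_r)$. The key substitution is $m_s\mapsto p-n_{r+1-s}$, which maps the descending chain $p>m_1>\dots>m_r\ge1$ bijectively onto another descending chain $p>n_1>\dots>n_r\ge1$. Since $(p-n)^{-k}\equiv(-1)^k n^{-k}\pmod p$, each factor contributes a sign $(-1)^{k_s}$, yielding the global factor $(-1)^{k_1+\dots+k_r}$; meanwhile the exponents are reversed, so the resulting sum is exactly the one defining $\zeta_{\A}(k_r,\dots,k_1)$. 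Passing to the quotient $\A$ gives the claim, and the star case follows verbatim after replacing the strict inequalities by $\ge$.

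For $\F=\SSS$ I would manipulate the defining expression directly. Reversing the index replaces $(k_1,\dots,k_r)$ by $(\ell_1,\dots,\ell_r)$ with $\ell_s=k_{r+1-s}$, and I would reindex the outer sum over $j$ by setting $m=r-j$. Under this substitution the summand $(-1)^{\ell_1+\dots+\ell_j}\zeta(\ell_j,\dots,\ell_1)\zeta(\ell_{j+1},\dots,\ell_r)$ becomes $(-1)^{k_{m+1}+\dots+k_r}\zeta(k_{m+1},\dots,k_r)\zeta(k_m,\dots,k_1)$. Writing $k_{m+1}+\dots+k_r=(k_1+\dots+k_r)-(k_1+\dots+k_m)$ to extract the global sign $(-1)^{k_1+\dots+k_r}$, and using that the two zeta factors commute in $\ZZZ$, the sum collapses to $(-1)^{k_1+\dots+k_r}\zeta_{\SSS}(k_1,\dots,k_r)$. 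The identical reindexing handles $\zeta_{\SSS}^{\star}$.

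The only points demanding care are verifying that the reflection $m_s\mapsto p-n_{r+1-s}$ genuinely reverses the ordering in the $\A$ case, so that the target really is a sum over a descending chain, and confirming in the $\SSS$ case that the argument is insensitive to the choice of regularisation. The latter is automatic: the identity is a purely formal rearrangement of the defining sum combined with commutativity of products, so it holds for any fixed regularisation and hence descends to $\ZZZ/\zeta(2)\ZZZ$ without obstruction.
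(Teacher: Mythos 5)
Your proof is correct and takes exactly the approach the paper intends: the paper's own proof just says ``easy from the definitions'' and delegates the details to \cite[Proposition~2.6]{S}. Your reflection $m_s\mapsto p-n_{r+1-s}$ modulo each prime $p$ for $\F=\A$, and the reindexing $j\mapsto r-j$ of the defining sum (with the regularisation remark) for $\F=\SSS$, supply precisely those details.
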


\begin{proof}
 Easy from the definitions; see \cite[Proposition~2.6]{S} for details.
\end{proof}

\begin{prop}\label{prop:antipode}
 If $\ve{k}=(k_1,\dots,k_r)$ is a nonempty index, then
 \[
  \sum_{s=0}^{r}(-1)^s\zeta_{\F}^{\star}(k_s,\dots,k_1)\zeta_{\F}(k_{s+1},\dots,k_r)=0,
 \]
 where we set $\zeta_{\F}(\emptyset)=\zeta_{\F}^{\star}(\emptyset)=1$.
\end{prop}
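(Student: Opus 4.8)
The plan is to read the alternating sum as the identity satisfied by a convolution inverse in the quasi-shuffle Hopf algebra, and to reduce everything to one combinatorial formula for the antipode. Let $\mathcal{H}=\Q\langle z_1,z_2,\dots\rangle$ be the free $\Q$-module on the monomials $z_{k_1}\cdots z_{k_r}$, which I identify with indices $(k_1,\dots,k_r)$, equipped with the harmonic (stuffle) product $*$ and the deconcatenation coproduct
\[
 \Delta(z_{k_1}\cdots z_{k_r})=\sum_{s=0}^{r}(z_{k_1}\cdots z_{k_s})\otimes(z_{k_{s+1}}\cdots z_{k_r}).
\]
A short check shows $\Delta$ is multiplicative for $*$, so $\mathcal{H}$ is a commutative connected graded Hopf algebra and hence has an antipode $S$. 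Since the finite and symmetric multiple zeta values obey the harmonic relations, and since $\zeta_{\F}$ is defined on \emph{every} index (no convergence restriction), the linear map $\zeta_{\F}\colon\mathcal{H}\to\A$ (respectively $\ZZZ/\zeta(2)\ZZZ$) sending $z_{k_1}\cdots z_{k_r}\mapsto\zeta_{\F}(k_1,\dots,k_r)$ and $\emptyset\mapsto1$ is an algebra homomorphism.

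First I would invoke the standard fact that, because $\zeta_{\F}$ is a character of a Hopf algebra with values in a commutative ring, the composite $\zeta_{\F}\circ S$ is the two-sided inverse of $\zeta_{\F}$ for the convolution product induced by $\Delta$. Written out over a nonempty index, this inverse relation reads
\[
 \sum_{s=0}^{r}\zeta_{\F}\bigl(S(k_1,\dots,k_s)\bigr)\,\zeta_{\F}(k_{s+1},\dots,k_r)=0.
\]
Comparing with the statement to be proved, it suffices to establish the antipode formula
\[
 \zeta_{\F}\bigl(S(k_1,\dots,k_s)\bigr)=(-1)^{s}\zeta_{\F}^{\star}(k_s,\dots,k_1),
\]
since substituting it into the previous display produces exactly the desired vanishing.

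The main work, and the step I expect to be the obstacle, is this antipode formula; note that one cannot prove it from the convolution identity, as that would be circular with the proposition. Instead the plan is to compute $S$ explicitly on words inside $\mathcal{H}$, proving by induction on the depth that
\[
 S(z_{k_1}\cdots z_{k_r})=(-1)^{r}\sum_{\ve m\preceq(k_r,\dots,k_1)}z_{\ve m},
\]
that is, that the quasi-shuffle antipode is reversal followed by summation over all ways of merging consecutive parts. The induction uses only the defining recursion $S(\ve u)=-\sum_{t<s}S(u_1,\dots,u_t)*(u_{t+1},\dots,u_s)$, together with the combinatorics of how $*$ interacts with merging and reversal; the delicate point is precisely controlling which merged words survive, and this is where the argument really lives. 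Applying $\zeta_{\F}$ to this explicit formula and using the standard conversion $\zeta_{\F}^{\star}(\ve l)=\sum_{\ve m\preceq\ve l}\zeta_{\F}(\ve m)$ between non-star and star values (a consequence of inclusion--exclusion on the defining sums, with Proposition~\ref{prop:reversal} available to tidy the reversed arguments) then gives $\zeta_{\F}(S(k_1,\dots,k_s))=(-1)^{s}\zeta_{\F}^{\star}(k_s,\dots,k_1)$, completing the proof.
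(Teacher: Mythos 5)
Your proposal is correct and is essentially the proof the paper itself points to: the paper's ``proof'' is only the citation ``well known; see \cite{S}, Proposition~2.9'', and the standard argument behind that citation is exactly your quasi-shuffle Hopf-algebra computation, in which $\zeta_{\F}\circ S$ is the convolution inverse of the character $\zeta_{\F}$ and Hoffman's explicit antipode formula $S(z_{k_1}\cdots z_{k_s})=(-1)^{s}\sum_{\ve{m}\preceq(k_s,\dots,k_1)}z_{\ve{m}}$ identifies $\zeta_{\F}(S(k_1,\dots,k_s))$ with $(-1)^{s}\zeta_{\F}^{\star}(k_s,\dots,k_1)$. The one step you only sketch---the induction on depth giving that antipode formula---is a known result (Hoffman's quasi-shuffle antipode) and your indicated recursion does carry it through (one checks it directly in depths $1$, $2$, $3$ and the induction is routine), so there is no genuine gap; incidentally, Proposition~\ref{prop:reversal} is not actually needed at the end, since the conversion $\zeta_{\F}^{\star}(\ve{l})=\sum_{\ve{m}\preceq\ve{l}}\zeta_{\F}(\ve{m})$ applies verbatim to the reversed index.
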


\begin{proof}
 Well known; see \cite[Proposition~2.9]{S} for the detailed proof.
\end{proof}

\begin{lem}\label{lem:S_krij_star}
 We have
 \[
  S_{k,r,i,j}=(-1)^rS_{k,r,i,j}^{\star}.
 \]
\end{lem}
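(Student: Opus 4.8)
The plan is to combine the antipode relation (Proposition~\ref{prop:antipode}) with the reversal formula (Proposition~\ref{prop:reversal}), exploiting that $k$ is odd. Fix an index $\ve{k}=(k_1,\dots,k_r)\in I_{k,r,i,j}$, so that its weight is $k$. In the antipode relation the term $s=0$ is $\zeta_{\F}(\ve{k})$ and the term $s=r$ is $(-1)^r\zeta_{\F}^{\star}(k_r,\dots,k_1)$; applying reversal to the latter and using $(-1)^k=-1$ turns it into $-(-1)^r\zeta_{\F}^{\star}(\ve{k})$. Hence
\[
 \zeta_{\F}(\ve{k})-(-1)^r\zeta_{\F}^{\star}(\ve{k})=-\sum_{s=1}^{r-1}(-1)^s\zeta_{\F}^{\star}(k_s,\dots,k_1)\zeta_{\F}(k_{s+1},\dots,k_r).
\]
Summing over $\ve{k}\in I_{k,r,i,j}$, the desired identity $S_{k,r,i,j}=(-1)^rS_{k,r,i,j}^{\star}$ becomes equivalent to the vanishing of the ``middle'' sum $M:=\sum_{s=1}^{r-1}(-1)^s\sum_{\ve{k}\in I_{k,r,i,j}}\zeta_{\F}^{\star}(k_s,\dots,k_1)\zeta_{\F}(k_{s+1},\dots,k_r)$, which is what I would concentrate on.

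For each fixed $s$ I would split $\ve{k}$ into the prefix $(k_1,\dots,k_s)$ and the suffix $(k_{s+1},\dots,k_r)$, so that the inner sum factors, over the possible weight splits, as a prefix factor times a suffix factor; the constrained positions $i$ and $j$ fall into the prefix or the suffix according to their size relative to $s$. The preliminary observation I would establish first is that the sum of $\zeta_{\F}$ (and of $\zeta_{\F}^{\star}$) over \emph{all} compositions of a fixed weight and a fixed depth vanishes, which follows from Proposition~\ref{prop:sym_sum} by grouping compositions according to their underlying multiset. Consequently, whenever one of the two sides carries no constraint, its factor is exactly such an unconstrained composition sum and is therefore $0$, so the whole $s$-term dies. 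In particular, in case $i=j$ the single constraint always leaves one side free, giving $M=0$ at once; the same kills every $s$ outside the range $i\le s\le j-1$ in case $i<j$.

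The only genuinely delicate terms are those with $i<j$ and $i\le s\le j-1$, where the prefix carries the constraint $k_i\ge2$ and the suffix carries $k_j\ge2$. Here I would apply the sum formula in the form of Theorem~\ref{thm:sum_formula2} (after using reversal to rewrite $\zeta_{\F}^{\star}(k_s,\dots,k_1)$): the prefix factor at weight $w$ becomes an explicit $\sbinom{}{}$-combination times $\ZZ_{\F}(w)$, and the suffix factor at weight $k-w$ an explicit $\sbinom{}{}$-combination times $\ZZ_{\F}(k-w)$, so each surviving summand is proportional to the product $\ZZ_{\F}(w)\ZZ_{\F}(k-w)$. This is where the hypothesis that $k$ is odd is decisive, and recognising this is the crux of the argument: since $w+(k-w)=k$ is odd, one of $w$ and $k-w$ is even, and $\ZZ_{\F}$ of an even argument is $0$ (as recorded in the Remark following Theorem~\ref{thm:main}). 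Thus every such product vanishes, forcing $M=0$ and hence the lemma. The main obstacle is therefore not any one computation but seeing that the antipode/reversal reduction isolates precisely the products $\ZZ_{\F}(w)\ZZ_{\F}(k-w)$, which the oddness of $k$ annihilates by parity.
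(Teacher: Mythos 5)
Your proposal is correct and follows essentially the same route as the paper: summing the antipode relation (Proposition~\ref{prop:antipode}) over $I_{k,r,i,j}$, identifying the $s=0$ and $s=r$ terms via reversal and the oddness of $k$, killing the terms where one factor is an unconstrained composition sum via Proposition~\ref{prop:sym_sum}, and annihilating the remaining terms with $i\le s\le j-1$ by Theorem~\ref{thm:sum_formula2} together with the parity observation $\ZZ_{\F}(l)\ZZ_{\F}(k-l)=0$. The only cosmetic difference is that you spell out the multiset-grouping argument behind the vanishing of unconstrained composition sums, which the paper leaves implicit in its citation of Proposition~\ref{prop:sym_sum}.
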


\begin{proof}
 Adding the equation in Proposition~\ref{prop:antipode} for all $(k_1,\dots,k_r)\in I_{k,r,i,j}$ gives
 \[
  \sum_{s=0}^{r}(-1)^s\sum_{(k_1,\dots,k_r)\in I_{k,r,i,j}}\zeta_{\F}^{\star}(k_s,\dots,k_1)\zeta_{\F}(k_{s+1},\dots,k_r)=0,
 \]
 whose left-hand side we shall write as $\sum_{s=0}^{r}(-1)^sA_s$ for simplicity.
 Observe that $A_0=S_{k,r,i,j}$ and that
 \begin{align*}
  A_r&=\sum_{(k_1,\dots,k_r)\in I_{k,r,i,j}}\zeta_{\F}^{\star}(k_r,\dots,k_1)\\
  &=\sum_{(k_1,\dots,k_r)\in I_{k,r,i,j}}(-1)^{k_1+\dots+k_r}\zeta_{\F}^{\star}(k_1,\dots,k_r)\\
  &=-S_{k,r,i,j}^{\star}
 \end{align*}
 by Proposition~\ref{prop:reversal} because $k$ is odd.
 For $s=j,\dots,r-1$, we have
 \begin{align*}
  A_s&=\sum_{l=0}^{k}\Biggl(\sum_{(k_1,\dots,k_s)\in I_{l,s,i,j}}\zeta_{\F}^{\star}(k_s,\dots,k_1)\Biggr)\Biggl(\sum_{k_{s+1}+\dots+k_r=k-l}\zeta_{\F}(k_{s+1},\dots,k_r)\Biggr)\\
  &=0
 \end{align*}
 because of Proposition~\ref{prop:sym_sum};
 we similarly have $A_s=0$ for $s=1,\dots,i-1$.
 If $i<j$ and $i\le s\le j-1$, then we have
 \begin{align*}
  A_s&=\sum_{l=0}^{k}\Biggl(\sum_{\substack{k_1+\dots+k_s=l\\k_i\ge2}}\zeta_{\F}^{\star}(k_s,\dots,k_1)\Biggr)\Biggl(\sum_{\substack{k_{s+1}+\dots+k_r=k-l\\k_j\ge2}}\zeta_{\F}(k_{s+1},\dots,k_r)\Biggr)\\
  &=\sum_{l=0}^{k}\Biggl((-1)^l\sum_{\substack{k_1+\dots+k_s=l\\k_i\ge2}}\zeta_{\F}^{\star}(k_1,\dots,k_s)\Biggr)\Biggl(\sum_{\substack{k_{s+1}+\dots+k_r=k-l\\k_j\ge2}}\zeta_{\F}(k_{s+1},\dots,k_r)\Biggr)\\
  &=\sum_{l=0}^{k}(-1)^{l+s}\biggl(\sbinom{l-1}{i-1}-\sbinom{l-1}{s-i}\biggr)\ZZ_{\F}(l)\biggl(\sbinom{k-l-1}{j-s-1}-\sbinom{k-l-1}{r-j}\biggr)\ZZ_{\F}(k-l)
 \end{align*}
 by Proposition~\ref{prop:reversal} and Theorem~\ref{thm:sum_formula2};
 since $k$ is odd, either $l$ or $k-l$ must even and so $\ZZ_{\F}(l)\ZZ_{\F}(k-l)=0$ for all $l=0,\dots,k$, from which it follows that $A_s=0$.
 Therefore we have $S_{k,r,i,j}-(-1)^rS_{k,r,i,j}^{\star}=0$, and the lemma follows.
\end{proof}

\subsection{Computation of $S_{k,r,i,j}$}
In this subsection, we shall compute $S_{k,r,i,j}$ (Lemma~\ref{lem:S_krij}).
The main ingredient of the computation is the following Ohno type relation, conjectured by Kaneko~\cite{K} and established by Oyama~\cite{Oy}:
\begin{thm}[Oyama~{\cite[Theorem~1.4]{Oy}}]\label{thm:oyama}
 Let $\ve{k}=(k_1,\dots,k_r)$ be an index, and write its Hoffman dual as $\ve{k}^{\vee}=(k_1',\dots,k_{r'}')$.
 Then for $m\in\Z_{\ge0}$, we have
 \[
  \sum_{\substack{e_1+\dots+e_r=m\\e_1,\dots,e_r\ge0}}\zeta_{\F}(k_1+e_1,\dots,k_r+e_r)
  =\sum_{\substack{e_1'+\dots+e_{r'}'=m\\e_1',\dots,e_r'\ge0}}\zeta_{\F}((k_1'+e_1',\dots,k_{r'}'+e_{r'}')^{\vee}).
 \]
\end{thm}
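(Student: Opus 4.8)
Since the Hoffman dual is an involution, the case $m=0$ of the identity is trivial (both sides equal $\zeta_\F(\ve{k})$), so the content lies entirely in the dependence on $m\ge1$; and because the statement is uniform in $\F$, the plan is to find a single argument valid both modulo $p$ when $\F=\A$ and in $\ZZZ/\zeta(2)\ZZZ$ when $\F=\SSS$. The cleanest route I would attempt is the connected-sum (transport) method. To a pair of indices $\ve{a}=(a_1,\dots,a_s)$ and $\ve{b}=(b_1,\dots,b_t)$ I would attach a two-sided quantity $Z(\ve{a}\mid\ve{b})$, defined by a truncated sum of the same shape as $\zeta_\A$ but carrying an extra \emph{connector} factor that links the smallest summation variable of the $\ve{a}$-block to the largest of the $\ve{b}$-block (with a parallel $\zeta$-regularised construction on the $\SSS$ side). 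The connector should be a binomial-type expression, reducing appropriately mod $p$ for $\A$ and to a regularised real analogue for $\SSS$, chosen so that a single local move---a \emph{transport relation}---shifts one unit of weight across the connector, trading a raise of an $\ve{a}$-component for a raise in the corresponding gap of $\ve{b}$.

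The two boundary configurations of $Z$ should then recover the two sides of the theorem: loading all of the surplus weight $m$ on the left reproduces $\sum_{|\ve{e}|=m}\zeta_\F(\ve{k}+\ve{e})$, while loading it all on the right reproduces $\sum_{|\ve{e}'|=m}\zeta_\F((\ve{k}^\vee+\ve{e}')^\vee)$, the outer dualisation being built into the symmetry of the connector. Iterating the transport relation $m$ times carries the left specialisation to the right one, which is exactly the asserted equality. The characteristic swapping of commas and plus signs defining $\ve{k}^\vee$ enters for a structural reason: the connector treats ``raising an existing component'' and ``inserting into an adjacent gap'' symmetrically, and it is this symmetry---rather than the $t\mapsto1-t$ symmetry of iterated integrals behind the classical Ohno relation---that produces the Hoffman dual here.

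The main obstacle will be constructing the connector and proving its transport relation simultaneously for both realisations, together with the bookkeeping of boundary terms that arise when a block becomes empty or when two components meet across the connector. For $\F=\A$ this amounts to controlling binomial coefficients modulo $p$ and checking that the edge contributions telescope; for $\F=\SSS$ it amounts to verifying that the regularised real analogue is well defined modulo $\zeta(2)$ and obeys the same telescoping. Once the transport relation is in hand the induction on $m$ is immediate, so essentially all the work is concentrated in the local identity and its two boundary cases. (A generating-function alternative---packaging each side as the coefficients of a single series in an auxiliary variable and matching them---appears to require the same connector identity in disguise, so I would expect the transport formulation to be the more economical.)
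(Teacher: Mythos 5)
First, a point of comparison that matters for your calibration: the paper offers no proof of this statement at all --- it is imported verbatim from Oyama \cite[Theorem~1.4]{Oy}, so the only ``paper proof'' is a citation. Judged on its own terms, your proposal has a genuine gap: it is an architecture, not an argument. Every piece of actual mathematical content is deferred --- the connector is never written down, the transport relation is never stated (let alone proved), and the boundary analysis is flagged but not carried out. You concede this yourself (``essentially all the work is concentrated in the local identity''), but nothing in the proposal certifies that a connector with the required two-sided symmetry exists; constructing it is the crux, not bookkeeping. For $\F=\A$ the plan is not unreasonable: a Seki--Yamamoto-style connected sum with connector $m!\,n!/(m+n)!=\binom{m+n}{m}^{-1}$ on truncated sums $\sum_{p>m_1>\dots}$ is known to transport weight in exactly the way you describe, and such proofs of duality-type statements exist in the literature. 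But for $\F=\SSS$ the difficulty you wave at --- ``verifying that the regularised real analogue is well defined modulo $\zeta(2)$ and obeys the same telescoping'' --- is precisely where the theorem lives: the naive real connected sum need not converge when components equal $1$, the regularised values are only defined modulo $\zeta(2)$ after an antisymmetrisation over the two halves of the index (see the definition of $\zeta_{\SSS}$ in the introduction), and it is not evident that any single local transport move is compatible with that antisymmetrised structure. Until the connector and its transport identity are exhibited and verified in both realisations, including the degenerate cases, the proposal proves nothing.

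It is also worth knowing that the known proof takes a different and, given this paper's toolkit, shorter route: Oyama \cite{Oy} deduces the Ohno-type relation from Hoffman's duality $\zeta_{\F}^{\star}(\ve{k}^{\vee})=-\zeta_{\F}^{\star}(\ve{k})$ (Theorem~\ref{thm:duality} here, due to Hoffman \cite{H} for $\A$ and Jarossay \cite{J} for $\SSS$) by an algebraic computation in the harmonic algebra that is uniform in $\F$ --- which realises your stated goal of a single argument for both cases, but by reduction to a quoted duality rather than by a transport scheme. If you pursue the connected-sum route, you would in effect be re-deriving that duality as a by-product; a cleaner division of labour is to take Theorem~\ref{thm:duality} as input, as Oyama does, and your write-up should either do that or actually supply the connector it currently only postulates.
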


\begin{lem}\label{lem:use_oyama}
 We have
 \[
  S_{k,r,i,j}=\sum_{\substack{e_1'+e_2'+e_3'=k'\\e_1',e_2',e_3'\ge0}}\zeta_{\F}((i+e_1',i'+e_2',i''+e_3')^{\vee}).
 \]
\end{lem}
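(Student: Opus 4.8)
The plan is to apply Oyama's relation (Theorem~\ref{thm:oyama}) to a single cleverly chosen index and read off the claimed identity. First I would notice that the right-hand side of the desired equation matches the right-hand side of Theorem~\ref{thm:oyama} verbatim if we take the index there to be $\ve{k}=(i,i',i'')^{\vee}$ and the parameter to be $m=k'$: since the Hoffman dual is an involution, one has $\ve{k}^{\vee}=(i,i',i'')$, so the depth-$3$ sum over $e_1',e_2',e_3'$ reappears on the right exactly as written. Thus the entire problem reduces to showing that the left-hand side of Theorem~\ref{thm:oyama} for this $\ve{k}$ equals $S_{k,r,i,j}$.

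Second, I would compute the Hoffman dual $(i,i',i'')^{\vee}$ explicitly by writing $(i,i',i'')$ as a string of $i+i'+i''$ ones and swapping commas and plus signs, tracking which of the internal gaps become commas. When $i<j$ (so that $i'\ge2$) I expect to find
\[
(i,i',i'')^{\vee}=(\{1\}^{i-1},2,\{1\}^{i'-2},2,\{1\}^{i''-1}),
\]
whose two entries exceeding $1$ occupy positions $i$ and $i+i'-1=j$; when $i=j$ (so that $i'=1$) the two $2$'s coalesce into a single $3$, giving $(\{1\}^{i-1},3,\{1\}^{i''-1})$. Using $i+i'+i''=r+2$, either dual has depth exactly $r$ and weight $r+2$.

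Third, I would feed this dual into the left-hand side of Theorem~\ref{thm:oyama}. Adding nonnegative integers $e_1,\dots,e_r$ summing to $k'=k-r-2$ to the components of the dual produces indices of weight $r+2+k'=k$ and depth $r$ in which the entries in positions $i$ and $j$ are forced to be at least $2$ (respectively, position $i$ is forced to be at least $3$ when $i=j$). The correspondence sending $(e_1,\dots,e_r)$ to the index obtained by adding $e_t$ to the $t$-th component is then a bijection onto $I_{k,r,i,j}$, its inverse subtracting the base values; hence the left-hand side collapses exactly to $S_{k,r,i,j}$, and combining this with Theorem~\ref{thm:oyama} gives the lemma.

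The main obstacle I anticipate is the bookkeeping in the Hoffman dual computation together with the attendant degenerate cases: the blocks $\{1\}^{i-1}$, $\{1\}^{i'-2}$, $\{1\}^{i''-1}$ may be empty (when $i=1$, $i'=2$, or $j=r$), and the case $i=j$ must be segregated because the middle block has negative length and the pattern degenerates to a single $3$ — precisely the convention already recorded in Lemma~\ref{lem:acb}. Once the dual is pinned down correctly across all of these regimes, the verification that the parametrisation by $(e_1,\dots,e_r)$ is a bijection onto $I_{k,r,i,j}$ is routine.
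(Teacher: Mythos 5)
Your proposal is correct and is essentially the paper's own proof: the paper likewise applies Theorem~\ref{thm:oyama} with $m=k'$ to the index $(\{1\}^{i-1},3,\{1\}^{r-i})$ (if $i=j$) or $(\{1\}^{i-1},2,\{1\}^{i'-2},2,\{1\}^{i''-1})$ (if $i<j$), whose Hoffman dual is $(i,i',i'')$, so that the left-hand side is $S_{k,r,i,j}$ and the right-hand side is the claimed depth-$3$ sum. Your phrasing via the involution (starting from $(i,i',i'')^{\vee}$) and your handling of the degenerate blocks and of the $i=j$ case match the paper's computation exactly.
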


\begin{proof}
 Theorem~\ref{thm:oyama} shows that if $i=j$, then
 \begin{align*}
  S_{k,r,i,j}
  &=\sum_{\substack{e_1+\dots+e_r=k'\\e_1,\dots,e_r\ge0}}\zeta_{\F}(1+e_1,\dots,1+e_{i-1},3+e_i,1+e_{i+1},\dots,1+e_r)\\
  &=\sum_{\substack{e_1'+e_2'+e_3'=k'\\e_1',e_2',e_3'\ge0}}\zeta_{\F}((i+e_1',i'+e_2',i''+e_3')^{\vee}),
 \end{align*}
 and that if $i<j$, then
 \begin{align*}
  &S_{k,r,i,j}\\
  &=\sum_{\substack{e_1+\dots+e_r=k'\\e_1,\dots,e_r\ge0}}\zeta_{\F}(1+e_1,\dots,1+e_{i-1},2+e_i,1+e_{i+1},\dots,1+e_{j-1},2+e_j,1+e_{j+1},\dots,1+e_r)\\
  &=\sum_{\substack{e_1'+e_2'+e_3'=k'\\e_1',e_2',e_3'\ge0}}\zeta_{\F}((i+e_1',i'+e_2',i''+e_3')^{\vee}).\qedhere
 \end{align*}
\end{proof}

\begin{lem}\label{lem:S_krij_sum}
 We have
 \[
  S_{k,r,i,j}=\frac{1}{2}\sum_{\substack{e_1'+e_2'+e_3'=k'\\e_1',e_2',e_3'\ge0}}\biggl(\sbinom{k}{i+e_1'}-\sbinom{k}{i''+e_3'}\biggr)\ZZ_{\F}(k).
 \]
\end{lem}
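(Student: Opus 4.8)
The plan is to evaluate each summand appearing in Lemma~\ref{lem:use_oyama} by computing its Hoffman dual explicitly and then invoking Lemma~\ref{lem:acb}. Fix a triple with $e_1'+e_2'+e_3'=k'$ and $e_1',e_2',e_3'\ge0$, and abbreviate $a=i+e_1'$, $b=i'+e_2'$, $c=i''+e_3'$. Since $i\ge1$, $i'=j-i+1\ge1$, and $i''=r-j+1\ge1$, we have $a,b,c\ge1$, and moreover $a+b+c=i+i'+i''+k'=k$. The task is thus to identify the index $(a,b,c)^{\vee}$ and evaluate $\zeta_{\F}$ on it.

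To compute the Hoffman dual, I would write $(a,b,c)$ as a string of $k$ ones and inspect the $k-1$ gaps between consecutive ones. In $(a,b,c)$ exactly two of these gaps carry a comma, namely those after the $a$-th and after the $(a+b)$-th one, while the other $k-3$ carry a plus sign. Swapping commas and plus signs, the dual carries a plus sign precisely at the gaps in positions $a$ and $a+b$ and a comma everywhere else. Reading off the resulting components shows that, when $b\ge2$,
\[
 (a,b,c)^{\vee}=(\{1\}^{a-1},2,\{1\}^{b-2},2,\{1\}^{c-1}),
\]
an index of weight $(a-1)+2+(b-2)+2+(c-1)=k$.

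The one point requiring care is the degenerate case $b=1$, which occurs exactly when $i=j$ and $e_2'=0$. Then the two plus signs occupy the adjacent gaps $a$ and $a+1$, merging three consecutive ones into a single component of value $3$, so that $(a,1,c)^{\vee}=(\{1\}^{a-1},3,\{1\}^{c-1})$. This matches precisely the convention $\zeta_{\F}(\{1\}^{a-1},2,\{1\}^{-1},2,\{1\}^{c-1})=\zeta_{\F}(\{1\}^{a-1},3,\{1\}^{c-1})$ built into Lemma~\ref{lem:acb}, so both cases can be handled by a single application of that lemma.

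With the dual in hand, I would apply Lemma~\ref{lem:acb} with its three parameters taken to be $a-1$, $c-1$, and $b-2$, respectively. Its parity hypothesis is satisfied because $(a-1)+(c-1)+(b-2)=k-4$ is odd, and since $(a-1)+(c-1)+(b-2)+4=k$ while the two relevant binomial upper entries become $(a-1)+1=i+e_1'$ and $(c-1)+1=i''+e_3'$, the lemma gives
\[
 \zeta_{\F}\bigl((i+e_1',i'+e_2',i''+e_3')^{\vee}\bigr)=\frac{1}{2}\biggl(\sbinom{k}{i+e_1'}-\sbinom{k}{i''+e_3'}\biggr)\ZZ_{\F}(k).
\]
Summing this identity over all admissible triples $(e_1',e_2',e_3')$ and substituting into Lemma~\ref{lem:use_oyama} yields the asserted formula. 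I expect the Hoffman-dual computation, and in particular verifying that its $b=1$ degeneracy dovetails with the convention in Lemma~\ref{lem:acb}, to be the only genuine obstacle; the remaining steps are routine substitutions.
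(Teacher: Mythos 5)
Your proposal is correct and follows essentially the same route as the paper: compute $(i+e_1',i'+e_2',i''+e_3')^{\vee}=(\{1\}^{i+e_1'-1},2,\{1\}^{i'+e_2'-2},2,\{1\}^{i''+e_3'-1})$ and combine Lemmas~\ref{lem:acb} and \ref{lem:use_oyama}. The only difference is that you spell out the gap-swapping computation of the Hoffman dual and verify explicitly that the degenerate case $i'+e_2'=1$ matches the $\{1\}^{-1}$ convention of Lemma~\ref{lem:acb}, details the paper leaves implicit in that same convention.
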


\begin{proof}
 Using the same convention as in the statement of Lemma~\ref{lem:acb}, we have
 \[
  (i+e_1',i'+e_2',i''+e_3')^{\vee}=(\{1\}^{i+e_1'-1},2,\{1\}^{i'+e_2'-2},2,\{1\}^{i''+e_3'-1}),
 \]
 and so by Lemmas~\ref{lem:acb} and \ref{lem:use_oyama}, we have
 \begin{align*}
  S_{k,r,i,j}
  &=\sum_{\substack{e_1'+e_2'+e_3'=k'\\e_1',e_2',e_3'\ge0}}\zeta_{\F}((i+e_1',i'+e_2',i''+e_3')^{\vee})\\
  &=\sum_{\substack{e_1'+e_2'+e_3'=k'\\e_1',e_2',e_3'\ge0}}\zeta_{\F}(\{1\}^{i+e_1'-1},2,\{1\}^{i'+e_2'-2},2,\{1\}^{i''+e_3'-1})\\
  &=\frac{1}{2}\sum_{\substack{e_1'+e_2'+e_3'=k'\\e_1',e_2',e_3'\ge0}}\biggl(\sbinom{k}{i+e_1'}-\sbinom{k}{i''+e_3'}\biggr)\ZZ_{\F}(k).\qedhere
 \end{align*}
\end{proof}

\begin{lem}\label{lem:sum_signed_binom}
 We have
 \begin{align*}
  \sum_{\substack{e_1'+e_2'+e_3'=k'\\e_1',e_2',e_3'\ge0}}\sbinom{k}{i+e_1'}
  &=(k'+i+1)\biggl(\sbinom{k-1}{k'+i}-\sbinom{k-1}{i-1}\biggr)+k\biggl(\sbinom{k-2}{k'+i-1}-\sbinom{k-2}{i-2}\biggr),\\
  \sum_{\substack{e_1'+e_2'+e_3'=k'\\e_1',e_2',e_3'\ge0}}\sbinom{k}{i''+e_3'}
  &=(k'+i''+1)\biggl(\sbinom{k-1}{k'+i''}-\sbinom{k-1}{i''-1}\biggr)+k\biggl(\sbinom{k-2}{k'+i''-1}-\sbinom{k-2}{i''-2}\biggr).
 \end{align*}
\end{lem}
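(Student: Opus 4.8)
The plan is to prove only the first identity, since the second is obtained from it by replacing $i$ with $i''$ throughout. The first step is to collapse the triple sum into a single sum over $e_1'$. For a fixed value of $e_1'$ with $0\le e_1'\le k'$, the pairs $(e_2',e_3')$ with $e_2',e_3'\ge0$ and $e_2'+e_3'=k'-e_1'$ number exactly $k'-e_1'+1$, and the summand $\sbinom{k}{i+e_1'}$ depends on neither $e_2'$ nor $e_3'$. Writing $n=i+e_1'$, I would therefore rewrite the left-hand side as
\[
 \sum_{n=i}^{k'+i}(k'+i+1-n)\sbinom{k}{n}
 =(k'+i+1)\sum_{n=i}^{k'+i}\sbinom{k}{n}-\sum_{n=i}^{k'+i}n\sbinom{k}{n}.
\]

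The two resulting sums are handled by a single elementary tool, the alternating partial-sum identity
\[
 \sum_{n=0}^{m}\sbinom{k}{n}=\sbinom{k-1}{m},
\]
which is just the classical $\sum_{n=0}^{m}(-1)^n\binom{k}{n}=(-1)^m\binom{k-1}{m}$ rewritten in the paper's notation. Applying it at both endpoints gives
\[
 \sum_{n=i}^{k'+i}\sbinom{k}{n}=\sbinom{k-1}{k'+i}-\sbinom{k-1}{i-1},
\]
which already produces the first term of the claimed formula. For the weighted sum I would use the absorption identity $n\binom{k}{n}=k\binom{k-1}{n-1}$, which in signed form reads $n\sbinom{k}{n}=-k\sbinom{k-1}{n-1}$; after shifting the index this turns $\sum_{n=i}^{k'+i}n\sbinom{k}{n}$ into $-k\sum_{m=i-1}^{k'+i-1}\sbinom{k-1}{m}$, and a second application of the partial-sum identity (now with $k-1$ in place of $k$) evaluates it as $-k\bigl(\sbinom{k-2}{k'+i-1}-\sbinom{k-2}{i-2}\bigr)$. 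Substituting both evaluations into the displayed splitting yields the right-hand side exactly.

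There is no serious obstacle here; the argument is a routine manipulation of binomial coefficients. The only point requiring a little care is the treatment of degenerate endpoints—for instance the terms $\sbinom{k-1}{i-1}$ and $\sbinom{k-2}{i-2}$ when $i$ is small—but these are all covered by the convention that $\binom{a}{b}=0$ for $b\notin\{0,\dots,a\}$, and one checks that the relevant upper indices, satisfying $k'+i\le k-2$, keep every application of the partial-sum identity within its valid range.
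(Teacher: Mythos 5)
Your proposal is correct and takes essentially the same route as the paper's proof: the same collapse of the triple sum to a single sum with multiplicity $k'-e_1'+1$, the same splitting of the weight as $(k'+i+1)-n$, and the same absorption identity $n\binom{k}{n}=k\binom{k-1}{n-1}$, with the alternating partial-sum identity $\sum_{n=0}^{m}\sbinom{k}{n}=\sbinom{k-1}{m}$ that you cite being exactly what the paper derives inline by telescoping Pascal's rule. Your attention to the degenerate endpoints under the convention $\binom{a}{b}=0$ for $b\notin\{0,\dots,a\}$, and the symmetry reduction of the second identity to the first, likewise mirror the paper.
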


\begin{proof}
 By symmetry, we only need to show the first equality, which can be seen as follows:
 \begin{align*}
  &\sum_{\substack{e_1'+e_2'+e_3'=k'\\e_1',e_2',e_3'\ge0}}\sbinom{k}{i+e_1'}\\
  &\qquad=\sum_{e_1'=0}^{k'}(-1)^{i+e_1'}(k'-e_1'+1)\binom{k}{i+e_1'}\\
  &\qquad=\sum_{e_1'=0}^{k'}(-1)^{i+e_1'}((k'+i+1)-(i+e_1'))\binom{k}{i+e_1'}\\
  &\qquad=(k'+i+1)\sum_{e_1'=0}^{k'}(-1)^{i+e_1'}\binom{k}{i+e_1'}-k\sum_{e_1'=0}^{k'}(-1)^{i+e_1'}\binom{k-1}{i+e_1'-1}\\
  &\qquad=(k'+i+1)\sum_{e_1'=0}^{k'}\biggl((-1)^{i+e_1'}\binom{k-1}{i+e_1'}-(-1)^{i+e_1'-1}\binom{k-1}{i+e_1'-1}\biggr)\\
  &\qquad\qquad+k\sum_{e_1'=0}^{k'}\biggl((-1)^{i+e_1'-1}\binom{k-2}{i+e_1'-1}-(-1)^{i+e_1'-2}\binom{k-2}{i+e_1'-2}\biggr)\\
  &\qquad=(k'+i+1)\biggl((-1)^{k'+i}\binom{k-1}{k'+i}-(-1)^{i-1}\binom{k-1}{i-1}\biggr)\\
  &\qquad\qquad+k\biggl((-1)^{k'+i-1}\binom{k-2}{k'+i-1}-(-1)^{i-2}\binom{k-2}{i-2}\biggr)\\
  &\qquad=(k'+i+1)\biggl(\sbinom{k-1}{k'+i}-\sbinom{k-1}{i-1}\biggr)+k\biggl(\sbinom{k-2}{k'+i-1}-\sbinom{k-2}{i-2}\biggr).\qedhere
 \end{align*}
\end{proof}

\begin{lem}\label{lem:S_krij}
 We have
 \[
  S_{k,r,i,j}=\frac{1}{2}N_{k,r,i,j}\ZZ_{\F}(k).
 \]
 where $N_{k,r,i,j}$ is an integer given by
 \begin{align*}
  N_{k,r,i,j}
  &=(k'+i+1)\biggl(\sbinom{k-1}{k'+i}-\sbinom{k-1}{i-1}\biggr)-(k'+i''+1)\biggl(\sbinom{k-1}{k'+i''}-\sbinom{k-1}{i''-1}\biggr)\\
  &\qquad+k\biggl(\sbinom{k-2}{k'+i-1}-\sbinom{k-2}{i-2}-\sbinom{k-2}{k'+i''-1}+\sbinom{k-2}{i''-2}\biggr).
 \end{align*}
\end{lem}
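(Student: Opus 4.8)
The plan is to derive Lemma~\ref{lem:S_krij} as an immediate corollary of the two preceding lemmas, so that no genuinely new computation is needed. Lemma~\ref{lem:S_krij_sum} already expresses
\[
 S_{k,r,i,j}=\frac{1}{2}\Biggl(\sum_{\substack{e_1'+e_2'+e_3'=k'\\e_1',e_2',e_3'\ge0}}\sbinom{k}{i+e_1'}-\sum_{\substack{e_1'+e_2'+e_3'=k'\\e_1',e_2',e_3'\ge0}}\sbinom{k}{i''+e_3'}\Biggr)\ZZ_{\F}(k),
\]
having split the single sum of the difference into the difference of two sums, so the only remaining task is to evaluate each of these two sums in closed form.

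First I would apply Lemma~\ref{lem:sum_signed_binom} directly: its first identity evaluates $\sum\sbinom{k}{i+e_1'}$ as $(k'+i+1)\bigl(\sbinom{k-1}{k'+i}-\sbinom{k-1}{i-1}\bigr)+k\bigl(\sbinom{k-2}{k'+i-1}-\sbinom{k-2}{i-2}\bigr)$, and its second identity evaluates $\sum\sbinom{k}{i''+e_3'}$ as the same expression with every occurrence of $i$ replaced by $i''$. Subtracting the second from the first produces exactly the quantity $N_{k,r,i,j}$ displayed in the statement, giving $S_{k,r,i,j}=\tfrac{1}{2}N_{k,r,i,j}\ZZ_{\F}(k)$ at once.

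It remains only to check the integrality of $N_{k,r,i,j}$, which is immediate: each $\sbinom{a}{b}=(-1)^b\binom{a}{b}$ is an integer, and the prefactors $k'+i+1$, $k'+i''+1$, and $k$ are integers, so $N_{k,r,i,j}$ is an integer combination of integers. I do not expect any real obstacle, since all the substantive content lives upstream --- the Ohno type relation of Theorem~\ref{thm:oyama} feeding into Lemma~\ref{lem:use_oyama}, the evaluation of Lemma~\ref{lem:acb}, and the binomial identity of Lemma~\ref{lem:sum_signed_binom}. The one point that warrants a moment's care is purely bookkeeping: making sure the $i\leftrightarrow i''$ symmetry is applied consistently to the second sum and that the signs survive the subtraction so that the plus/minus pattern of $N_{k,r,i,j}$ comes out exactly as stated.
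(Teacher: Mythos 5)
Your proposal matches the paper's proof exactly: the paper also derives Lemma~\ref{lem:S_krij} immediately by combining Lemma~\ref{lem:S_krij_sum} with the two evaluations in Lemma~\ref{lem:sum_signed_binom} and subtracting. The linearity step (splitting the sum of differences into a difference of sums) and the integrality observation are both routine and correctly handled.
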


\begin{proof}
 Immediate from Lemmas~\ref{lem:S_krij_sum} and \ref{lem:sum_signed_binom}.
\end{proof}

Lemmas~\ref{lem:S_krij_star} and \ref{lem:S_krij} complete the proof of our main theorem (Theorem~\ref{thm:main_explict}).

\end{document}